\newtheorem{theorem}{Theorem}
\newtheorem{defn}{Definition}
\newtheorem{lem}{Lemma}
\newtheorem{proposition}{Proposition}
\newtheorem{example}{Example}
\newtheorem{remark}{Remark}
\newcommand{\al}{\alpha}
\newcommand{\be}{\beta}
\newcommand{\si}{\sigma}
\newcommand{\NN}{\mathbb{N}} 
\newcommand{\PP}{\mathbb{P}} 
\newcommand{\id}{\operatorname{id}} 
\newcommand{\kk}{\mathbf{k}} 
\newcommand{\QSym}{\operatorname{QSym}}
\newcommand{\Des}{\operatorname{Des}}
\newcommand{\Odd}{\operatorname{Odd}}
\newcommand{\Comp}{\operatorname{Comp}}
\newcommand{\Peak}{\operatorname{Peak}}
\providecommand*{\shuffle}{%
  \mathbin{\mathpalette\shuffle@{}}%
}
\newcommand*{\shuffle@}[2]{%
  \sbox0{$#1\vcenter{}$}%
  \kern .15\ht0 
  \rlap{\vrule height .25\ht0 depth 0pt width 2.5\ht0}%
  \raise.1\ht0\hbox to 2.5\ht0{%
    \vrule height 1.75\ht0 depth -.1\ht0 width .17\ht0 %
    \hfill
    \vrule height 1.75\ht0 depth -.1\ht0 width .17\ht0 %
    \hfill
    \vrule height 1.75\ht0 depth -.1\ht0 width .17\ht0 %
  }%
  \kern .15\ht0 
}
\title[The enriched monomial basis of QSym]{Weighted posets and
the enriched monomial basis of QSym}
\author[D. Grinberg \and E.A. Vassilieva]{Darij Grinberg\thanks{\href{mailto:darijgrinberg@gmail.com}{darijgrinberg@gmail.com}}\addressmark{1}, \and Ekaterina A. Vassilieva\thanks{\href{mailto:katya@lix.polytechnique.fr}{katya@lix.polytechnique.fr}}\addressmark{2}}
\address{\addressmark{1}Department of Mathematics, Drexel University, Philadelphia, PA 19104, USA\\ \addressmark{2}Laboratoire d'Informatique de l'Ecole Polytechnique, Palaiseau, France}
\abstract{Gessel's fundamental and Stembridge's peak functions are the generating functions for (enriched) $P$-partitions on labelled chains. They are also the bases of two significant subalgebras of formal power series, respectively the ring of quasisymmetric functions (QSym) and the algebra of peaks. Hsiao introduced the monomial peak functions, a basis of the algebra of peaks indexed by odd integer compositions whose relation to peak functions mimics the one between the monomial and fundamental bases of QSym. We show that the extension of monomial peaks to any composition is a new basis of QSym and generalise Hsiao's results including the product rule. To this end we introduce a weighted variant of posets and study their generating functions.}
\keywords{Quasisymmetric functions, $P$-partitions, peak functions}
\begin{document}
\maketitle

\section{Introduction}
\subsection{Compositions and permutation statistics}
Let $\PP=\left\{1,2,3,\dots\right\}$, $\NN = \left\{0,1,2,\ldots\right\}$, and $[n] = \left\{1,2,\dots, n\right\}$ for any $n \in \NN$. A \emph{composition} $\al = (\al_1, \al_2, \dots, \al_p)$ of an integer $n$ is a finite sequence of $\ell(\al) := p$ positive integers such that
$|\al| := \sum_i \al_i = n$. An \emph{odd composition} is a composition containing only odd integers. We let $\Comp(n)$ and $\Odd(n)$ denote the sets of compositions and odd compositions (respectively) of $n$.\\
\indent Let $S_n$ be the symmetric group on $[n]$. Given a permutation $\pi \in S_n$, we look at two significant statistics. The \emph{descent set} $\Des(\pi)$ and the \emph{peak set} $\Peak(\pi)$ of $\pi$ are the subsets of $[n-1]$ defined as 
\begin{gather*}
\Des(\pi) = \{1\leq i\leq n-1| \pi(i)>\pi(i+1)\},\\
\Peak(\pi) = \{2\leq i\leq n-1| \pi(i-1)<\pi(i)>\pi(i+1)\}.
\end{gather*}
A subset of $[n-1]$ that neither contains $1$ nor contains two consecutive integers is called \emph{peak-lacunar}.
The peak set of a permutation is peak-lacunar.

There is a natural bijection between compositions of $n$ and subsets of $[n-1]$. Namely, for a composition $\al = (\al_1, \al_2, \dots, \al_p) \in \Comp(n)$, let $\Des(\al)$ be the subset of $[n-1]$ defined as $$\Des(\al) = \{\al_1,\al_1+\al_2,\dots,\al_1+\al_2+\dots+\al_{p-1}\}.$$
As a result, $|\Comp(n)| = 2^{n-1}$. Odd compositions of $n$, on the other hand, are in one-to-one correspondence with peak-lacunar subsets of $[n-1]$. This bijection may be stated as follows (see \cite{Hsi07}). For $\al =(2i_1+1,2i_2+1,\dots,2i_p+1) \in \Odd(n)$, let 
$$
\widehat{\al} = (\overbrace{2,\dots,2}^{i_1},1,\overbrace{2,\dots,2}^{i_2},1,\dots,\overbrace{2,\dots,2}^{i_p},1).
$$ 
If
$$
\widehat{\al} = (\overbrace{1,\dots,1}^{j_1},2,\overbrace{1,\dots,1}^{j_2},2,\dots,\overbrace{1,\dots,1}^{j_l},2,1,\dots,1),
$$
then we set 
$$
\Peak(\al) = \left \{\sum_{m=1}^s (j_m+2)|1\leq s \leq l \right \} .
$$
\begin{example}
Let $\al = (1,1,3,3,1) \in \Odd(9)$. Then, $\Des(\al)= \{1,2,5,8\}$. Furthermore, one has $\widehat{\al} = (1,1,2,1,2,1,1)$ and $\Peak(\al) = \{4,7\}$.
\end{example}
Finally, a permutation $\pi \in S_n$ may be written as a word $\pi = \pi_1\pi_2\dots\pi_n$ with $\pi_i = \pi(i)$. This is called the \emph{one-line notation} of $\pi$.
Given two permutations $\pi \in S_n$ and $\si \in S_m$, let $\pi \shuffle \si$ be the subset of $S_{n+m}$ consisting of all shuffles of the words $\pi=\pi_1\pi_2\dots\pi_n$ and $n+\si=(n+\si_1) (n+\si_2)\dots (n+\si_m)$ (that is, of all permutations in $S_{n+m}$ that contain $\pi$ and $n+\si$ as subwords in their one-line notation).
\subsection{Quasisymmetric functions}
Fix a commutative ring $\kk$ and consider the ring $\kk \left[\left[ X \right]\right]$ of formal power series in countably many commuting
variables $X=\{x_{1},x_{2},x_{3},\dots\}$. In \cite{Ges84}, Gessel introduces the \emph{quasisymmetric functions}, i.e., the bounded-degree formal power series in $\kk \left[\left[ X \right]\right] $ such that for any non-negative integers $\al_1,\al_2,\dots,\al_p$ and any strictly increasing sequence of distinct indices $i_1 < i_2 < \dots < i_p$ the coefficient of $x_1^{\al_1}  x_2^{\al_2}  \cdots  x_p^{\al_p}$ is equal to the coefficient of $x_{i_1}^{\al_1}  x_{i_2}^{\al_2}  \cdots  x_{i_p}^{\alpha_p}$. The set of all quasisymmetric functions is a $\kk$-subalgebra of $\kk \left[\left[ X \right]\right]$, denoted by
$\QSym$ and called the \emph{ring of quasisymmetric functions}.
For any $n \in \NN$ and any composition $\al =(\al_1,\al_2,\dots,\al_p) \in \Comp(n)$, the \emph{monomial quasisymmetric function} $M_\al$ and the \emph{fundamental quasisymmetric function} $L_\al$ indexed by $\al$ are defined as
\begin{equation*}
\label{eq : M+L}
M_{\al} = \sum\limits_{i_1 < \cdots < i_p} x_{i_1}^{\al_1}x_{i_2}^{\al_2} \cdots x_{i_p}^{\al_p},\;\;\;\;\;\; L_{\al} = \sum\limits_{\substack{i_1 \leq \cdots \leq i_n;\\j\in \Des(\al) \Rightarrow i_j<i_{j+1}}} x_{i_1}x_{i_2} \cdots x_{i_n}.
\end{equation*}
\begin{example}
As an example, for $n=3$, we have
\begin{gather*}
M_{(2,1)}=\sum_{i<j}x_{i}^{2}x_{j}=x_{1}^{2}x_{2}+x_{1}%
^{2}x_{3}+x_{2}^{2}x_{3}+x_{1}^{2}x_{4}+x_{2}^{2}x_{4}+x_{3}^{2}x_{4}+\dots,\\
L_{(2,1)}=\sum_{i\leq j <k}x_{i}x_{j}x_k=x_{1}^{2}x_{2}+x_{1}%
^{2}x_{3}+x_{1}x_{2}x_3+x_{2}^{2}x_{3}+x_{1}^{2}x_{4}+x_{1}
x_{2}x_{4}+x_{2}^{2}x_{4}+\dots.
\end{gather*}
\end{example}
The sets $\left\{M_\al\right\}_{n\in \NN,\ \al\in \Comp(n)}$
and $\left\{L_\al\right\}_{n\in \NN,\ \al\in \Comp(n)}$
are two bases of the $\kk$-module $\QSym$.
They are related through
\begin{equation}
\label{eq : LM} L_{\al} = \sum_{{\substack{\beta \in \Comp(n);\\\Des(\alpha) \subseteq \Des(\beta)}}} M_{\beta}.
\end{equation}
\subsection{Peak and monomial peak functions}
In \cite{Ste97}, Stembridge studies another significant family of quasisymmetric functions. Given an $n \in \NN$ and an $\al \in \Odd(n)$, we define
\begin{equation*}
\label{eq : K}
K_{\al} = \sum\limits_{\substack{i_1 \leq \dots \leq i_n;\\j\in \Peak(\al) \Rightarrow  i_{j-1}<i_{j+1}}}2^{|\{i_1,i_2,\dots,i_n\}|} x_{i_1}x_{i_2} \cdots x_{i_n}.
\end{equation*}
The $K_{\al}$ are quasisymmetric functions named \emph{peak functions} because of their relation to the peak statistic on permutations (see Section \ref{sec : poset}). The set $\{K_{\al}\}_{n \in \NN,\ \al \in \Odd(n)}$ is a basis of a subalgebra of $\QSym$ named the \emph{algebra of peaks} ($\mathbf{\Pi}$ in \cite{Ste97}). Hsiao defines in \cite{Hsi07} another basis of the algebra of peaks called the \emph{monomial peak functions}. For any odd composition $\al =(\al_1,\al_2,\dots,\al_p)\in \Odd(n)$, let
\begin{equation}
\label{eq : monpeak}
\eta_{\al} = (-1)^{(n-\ell(\al))/2}\sum\limits_{\substack{i_1 \leq \dots \leq i_p}}2^{|\{i_1,i_2,\dots,i_p\}|} x_{i_1}^{\al_1}x_{i_2}^{\al_2} \cdots x_{i_p}^{\al_p}.
\end{equation}
An identity similar to Equation (\ref{eq : LM}) relates peak and monomial peak functions:
\begin{equation}
\label{eq : KE} K_{\al} = \sum_{{\substack{\beta \in \Odd(n);\\ \Peak(\beta) \subseteq \Peak(\al)}}} \eta_{\beta}.
\end{equation}

Hsiao shows that the monomial peak functions $\eta_\alpha$ form a basis of the algebra of peaks and gives closed-form formulas for their expression in terms of the classical bases of $\QSym$, their antipode, product and coproduct. His proof relies on Equation (\ref{eq : KE}) and uses the fact that the compositions are actually odd ones.\\
\indent Nevertheless, one may extend Equation (\ref{eq : monpeak}) to \textbf{any} integer composition (to the exception of the factor $(-1)^{(n-\ell(\al))/2}$ that is not well-defined if $n$ and $\ell(\al)$ are not of the same parity). Whether this extension preserves  the nice properties of the monomial peak functions appears as a natural question. We give it a \textbf{positive} answer by showing that the extended monomial peak functions form a basis of $\QSym$ and generalising the results of Hsiao. We call this new basis of $\QSym$ the \emph{enriched monomial basis}. Our methods involve various algebraic and combinatorial arguments including the introduction of a new variant of posets and $P$-partitions whose generating functions generalise the monomial, fundamental, peak and enriched monomial functions as well as other classical variants of quasisymmetric functions. Before stating these results, we recall classical results for (enriched) $P$-partitions.

Part of this material will appear in \cite{Gri20} (work in progress).

We note that the $K_\al$ basis of the algebra of peaks can also be extended to a larger family in $\QSym$ although this extension is not a basis of $\QSym$; this is done in a recent work of Khesin and Zhang \cite{KheZha20} (which also extends $\QSym$ further by introducing two extra variables $x_0$ and $x_\infty$).

\subsection{Posets and $P$-partitions}
\label{sec : poset}
One of the main motivations for studying the power series $L_\al$ and $K_\al$ is their relation to descent and peak statistics on permutations. These relations are proved through the study of (enriched) $P$-partitions. In this section, we recall the definition and main results without proofs and refer the reader to \cite{Sta01, Ges84, Ste97} for further details. 
\begin{defn}[Labelled posets] A \emph{labelled poset} $P=([n],<_P)$ is an arbitrary partial order $<_P$ on the set $[n]$. 
\end{defn} 
\begin{defn}[$P$-partition]\label{def : ppart}
Let $P = ([n],<_P)$ be a labelled poset.
A \emph{$P$-partition} is a map $f: [n]\longrightarrow \PP$ that satisfies the two following conditions:
\begin{itemize}
\item[(i)] If $i <_P j$, then $f(i) \leq f(j)$.
\item[(ii)] If $i <_P j$ and $i > j$, then $f(i) < f(j)$.
\end{itemize}
(The relations $<$ and $>$ with no subscript stand for the classical total order on $\PP$.)
\end{defn}
\begin{defn}
We let $\PP^{\pm}$ be the set $\left\{-,+\right\} \times \PP$ consisting of all pairs of a sign and a positive integer. The pair $\left(\pm, n\right) \in \PP^{\pm}$ will be denoted by $\pm n$; its \emph{absolute value} $\left|\pm n\right|$ is defined to be $n$. We equip the set $\PP^{\pm}$ with a total order given by $-1<1<-2<2<-3<\dots$. We embed $\PP$ into $\PP^{\pm}$ by identifying each $n$ with $+n$; we also let $-\PP \subseteq \PP^{\pm}$ be the set of all $-n$ for $n \in \PP$.
\end{defn}
\begin{defn}[Enriched $P$-partition]\label{def : enriched}
Let $P = ([n],<_P)$ be a labelled poset.
An \emph{enriched $P$-partition} is a map $f: [n]\longrightarrow \PP^{\pm} $ that satisfies the following two conditions:
\begin{itemize}
\item[(i)] If $i <_P j$ and $i < j$, then $f(i) < f(j)$ or $f(i) = f(j) \in \PP$.
\item[(ii)] If $i <_P j$ and $i>j$, then $f(i) < f(j)$ or $f(i) = f(j) \in -\PP$.
\end{itemize} 
\end{defn} 
Note that $P$-partitions are the same as enriched $P$-partitions with no negative values (i.e., no values of the form $-n$). A more general concept was defined in \cite{Gri18}:
\begin{defn}[$\mathcal{Z}$-enriched $P$-partition]\label{def : Zenriched}
Let $\mathcal{Z}$ be a subset of the totally ordered set $\PP^{\pm}$.
Let $P=([n],<_P)$ be a labelled poset.
A \emph{$\mathcal{Z}$-enriched $P$-partition} is
an enriched $P$-partition $f : [n] \longrightarrow \PP^{\pm} $ with $f([n]) \subseteq \mathcal{Z}$.
Let $\mathcal{L}_\mathcal{Z}(P)$ denote the set of $\mathcal{Z}$-enriched $P$-partitions.
\end{defn}
Consider the alphabet (i.e., set of indeterminates) $X = \left\{x_1,x_2,x_3,\ldots\right\}$, a labelled poset $P =([n],<_P)$, and a subset $\mathcal{Z}$ of $\PP^{\pm}$. Define the  \emph{$\mathcal{Z}$-generating function of $P$} as the formal power series
\begin{align}
\label{eq : GammaTrad}
\Gamma_{\mathcal{Z}}([n], <_P) &= \sum_{f \in \mathcal{L}_\mathcal{Z}([n],<_P)}\ \ \prod_{1\leq i \leq n}x_{|f(i)|} .
\end{align}
Given a permutation $\pi \in S_n$, let $P_\pi = ([n],<_\pi)$ denote the labelled poset on the set $[n]$, where the order relation $<_\pi$ is such that $\pi_i <_\pi \pi_j$ if and only if $i < j$ (see Figure \ref{fig : ppart}).
\begin{figure}[htbp]
\begin{center}
 \includegraphics[scale=0.20]{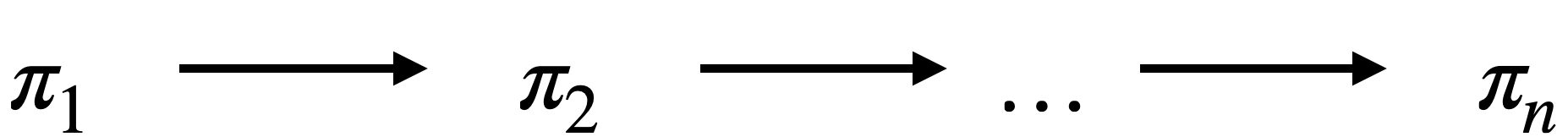}\caption{The labelled poset associated to permutation. An arrow from $\pi_i$ to $\pi_{i+1}$ means that $\pi_{i+1}$ covers $\pi_i$ in the poset.}
 \label{fig : ppart}
 \end{center}
 \end{figure}
\begin{proposition}[\cite{Ges84,Ste97}] 
\label{prop : LK}
Set $L_{\pi}= \Gamma_{\PP}([n],<_\pi)$ and $K_{\pi}= \Gamma_{\PP^\pm}([n],<_\pi)$. The function $L_\pi$ is equal to the fundamental quasisymmetric function $L_\al$ indexed by the unique composition $\al$ such that $\Des(\al) = \Des(\pi)$. Similarly, $K_\pi$ is equal to the peak function $K_\al$ indexed by the unique odd composition $\al$ such that $\Peak(\al) = \Peak(\pi)$. 
\end{proposition}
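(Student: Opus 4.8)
The plan is to exploit the fact that $P_\pi$ is a \emph{chain}: by definition of $<_\pi$ we have $\pi_1 <_\pi \pi_2 <_\pi \dots <_\pi \pi_n$, so a (possibly enriched) $P_\pi$-partition is nothing but a labelling of this chain, computable step by step. A preliminary observation I would record is that conditions (i)--(ii) of Definitions \ref{def : ppart} and \ref{def : enriched} need only be checked on the consecutive pairs $\pi_k <_\pi \pi_{k+1}$: a strict requirement imposed by a pair $\pi_k <_\pi \pi_l$ with $k<l$ is automatically inherited from an intermediate consecutive step, because $\pi_k > \pi_l$ (resp.\ $\pi_k < \pi_l$) forces a descent (resp.\ an ascent) somewhere in $\{k,\dots,l-1\}$. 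Granting this, the statement about $L_\pi$ is immediate. A $P_\pi$-partition $f\colon[n]\to\PP$ amounts to a weakly increasing sequence $f(\pi_1)\le f(\pi_2)\le\dots\le f(\pi_n)$ of positive integers that increases strictly at each $k\in\Des(\pi)$; writing $i_k=f(\pi_k)$ and using $|f(i)|=f(i)$, this gives
$$\Gamma_{\PP}([n],<_\pi)=\sum_{\substack{i_1\le\dots\le i_n\\ k\in\Des(\pi)\Rightarrow i_k<i_{k+1}}}x_{i_1}x_{i_2}\cdots x_{i_n},$$
which is exactly $L_\al$ for the unique $\al\in\Comp(n)$ with $\Des(\al)=\Des(\pi)$.

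For $K_\pi$ the chain description still applies, but now $f$ takes values in $\PP^{\pm}$. As the order on $\PP^{\pm}$ is $-1<1<-2<2<\dots$, a weakly increasing value sequence $f(\pi_1)\le\dots\le f(\pi_n)$ has weakly increasing absolute values $m_k:=|f(\pi_k)|$, and each integer $v$ can occur only as a run $-v,\dots,-v,+v,\dots,+v$. I would group $[n]$ into maximal \emph{blocks} of positions sharing a common absolute value, and for each fixed weakly increasing sequence $m=(m_1\le\dots\le m_n)$ count the enriched $P_\pi$-partitions with $|f(\pi_k)|=m_k$; every such $f$ contributes the same monomial $x_{m_1}\cdots x_{m_n}$. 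The whole statement then reduces to one counting lemma, which is the main obstacle: this count equals $2^{|\{m_1,\dots,m_n\}|}$ when $m_{j-1}<m_{j+1}$ for every $j\in\Peak(\pi)$, and $0$ otherwise.

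To prove the lemma I would argue block by block. Across a block boundary the absolute values strictly increase, so $f(\pi_k)<f(\pi_{k+1})$ holds for \emph{any} signs and conditions (i)--(ii) impose nothing; hence the blocks are independent and the total count is the product of the per-block counts. Inside a single block on positions $a,\dots,b$ (absolute value $v$), weak monotonicity forces a threshold $t$ with $f(\pi_k)=-v$ for $k<t$ and $f(\pi_k)=+v$ for $k\ge t$, and conditions (i)--(ii) then say that consecutive $-v$'s must sit at descents and consecutive $+v$'s at ascents (the lone transition $-v,+v$ being free). A direct check shows the admissible thresholds are those $t$ for which $a,\dots,t-2$ are descents and $t,\dots,b-1$ are ascents, and that there are \emph{exactly two} of them precisely when the block's ascent/descent word contains no ascent immediately followed by a descent, and \emph{none} otherwise. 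The latter obstruction is an internal peak, and a peak $j\in\Peak(\pi)$ is internal to a block exactly when $m_{j-1}=m_j=m_{j+1}$, i.e.\ when $m_{j-1}<m_{j+1}$ fails. Multiplying the factors $2$ over the $|\{m_1,\dots,m_n\}|$ blocks yields the lemma, whence
$$\Gamma_{\PP^{\pm}}([n],<_\pi)=\sum_{\substack{m_1\le\dots\le m_n\\ j\in\Peak(\pi)\Rightarrow m_{j-1}<m_{j+1}}}2^{|\{m_1,\dots,m_n\}|}\,x_{m_1}\cdots x_{m_n}=K_\al,$$
for the unique $\al\in\Odd(n)$ with $\Peak(\al)=\Peak(\pi)$ (which exists because $\Peak(\pi)$ is peak-lacunar). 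The two delicate points are the ``exactly two thresholds'' count per block and the dictionary ``internal peak $\Leftrightarrow m_{j-1}=m_{j+1}$''; the rest is bookkeeping.
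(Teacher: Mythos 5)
Your proof is correct, and the paper itself offers no proof of this proposition --- it is quoted from Gessel and Stembridge, with the text explicitly deferring to \cite{Ges84,Ste97}; your argument is essentially the standard one from those sources (reduce to covering relations of the chain, then count sign assignments block by block over constant runs of absolute values). The two points you flag as delicate do check out: within a block on positions $a,\dots,b$ the admissible thresholds $t$ form the interval $\{a,\dots,a+d+1\}\cap\{b-e,\dots,b+1\}$, where $d$ and $e$ are the lengths of the initial descent run and terminal ascent run of the block's descent word, and this intersection has exactly $2$ elements when $d+e$ equals the number of adjacent pairs in the block (no ascent--descent pattern, hence no internal peak) and is empty otherwise; and a peak $j\in\Peak(\pi)$ lies inside a block precisely when $m_{j-1}=m_j=m_{j+1}$, which under weak monotonicity is the negation of $m_{j-1}<m_{j+1}$.
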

This description leads to easy product formulas for both fundamental and peak quasisymmetric functions. Indeed, one has the following proposition.
\begin{proposition}[\cite{Ges84, Ste97}]
Given two permutations $\pi \in S_n$ and $\sigma \in S_m$, the product of the generating functions $\Gamma_{\mathcal{Z}}([n],<_\pi)$ and $\Gamma_{\mathcal{Z}}([m],<_\sigma)$ is given by 
\begin{equation}
\label{eq : GGG}
\Gamma_{\mathcal{Z}}([n],<_\pi)\Gamma_{\mathcal{Z}}([m],<_\sigma) = \sum_{\gamma \in \pi \shuffle \sigma}\Gamma_{\mathcal{Z}}([n+m],<_\gamma) .
\end{equation}
\end{proposition}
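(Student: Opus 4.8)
The plan is to realize the product as the $\mathcal{Z}$-generating function of a single labelled poset, and then to decompose that generating function over linear extensions. First I would form the disjoint union poset $Q=([n+m],<_Q)$ carrying the two chains side by side: on $\{1,\dots,n\}$ impose the order $<_\pi$, on $\{n+1,\dots,n+m\}$ impose the shifted order (so that $n+\sigma_a <_Q n+\sigma_b$ iff $a<b$), and put no relations between the two blocks. Because the blocks are $<_Q$-incomparable and every label of the second block exceeds every label of the first, conditions (i)--(ii) in the definition of a $\mathcal{Z}$-enriched $P$-partition never couple the two blocks. Hence a map $h\in\mathcal{L}_{\mathcal{Z}}(Q)$ is exactly a pair $(f,g)$, where $f=h|_{\{1,\dots,n\}}$ is a $\mathcal{Z}$-enriched $P_\pi$-partition and $g$ (read off the second block, shifted back by $n$) is a $\mathcal{Z}$-enriched $P_\sigma$-partition. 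Since the weight $\prod_i x_{|h(i)|}$ factors as $\bigl(\prod_i x_{|f(i)|}\bigr)\bigl(\prod_j x_{|g(j)|}\bigr)$, this gives $\Gamma_{\mathcal{Z}}(Q)=\Gamma_{\mathcal{Z}}([n],<_\pi)\,\Gamma_{\mathcal{Z}}([m],<_\sigma)$, accounting for the left-hand side.

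Next I would invoke the fundamental lemma of (enriched) $P$-partitions: for any labelled poset $P$ on $[N]$ the set $\mathcal{L}_{\mathcal{Z}}(P)$ splits as a disjoint union $\bigsqcup_{w}\mathcal{L}_{\mathcal{Z}}(P_w)$ over the linear extensions $w$ of $P$, each $w$ being encoded as the permutation listing $[N]$ in $w$-increasing order so that $P_w=([N],<_w)$. The linear extensions of $Q$ are precisely the interleavings of the two chains that preserve the internal order of $\pi_1\cdots\pi_n$ and of $(n+\sigma_1)\cdots(n+\sigma_m)$, i.e.\ the shuffles $\gamma\in\pi\shuffle\sigma$, and the chain attached to such a $\gamma$ is exactly $([n+m],<_\gamma)$. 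Therefore the lemma yields $\Gamma_{\mathcal{Z}}(Q)=\sum_{\gamma\in\pi\shuffle\sigma}\Gamma_{\mathcal{Z}}([n+m],<_\gamma)$, and combining with the previous paragraph proves the claim.

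The substantive content, and the step I expect to be the main obstacle, is the fundamental lemma itself: that every $h\in\mathcal{L}_{\mathcal{Z}}(Q)$ lies in $\mathcal{L}_{\mathcal{Z}}(P_\gamma)$ for exactly one shuffle $\gamma$. I would prove this by attaching to each $h$ a total order $\prec_h$ on $[n+m]$: order two elements by the value of $h$, and break ties among elements sharing a common value $v$ by \emph{increasing} label when $v\in\PP$ and by \emph{decreasing} label when $v\in-\PP$. The delicate points are (a) that $\prec_h$ genuinely refines $<_Q$, which uses conditions (i)--(ii) exactly: if $i<_Q j$ then either $h(i)<h(j)$, or $h(i)=h(j)$ with a sign forced to agree with the comparison of labels, so the tie-break orders $i$ and $j$ correctly; and (b) that $\prec_h$ is the \emph{unique} chain refinement making $h$ a $P$-partition, since the enriched conditions dictate the order whenever the two values differ and dictate precisely this tie-break whenever they agree. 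The general $\mathcal{Z}$-enriched form of this lemma is established in \cite{Gri18}, so one may alternatively cite it directly; but for the specialization from chains to shuffles needed here the tie-breaking argument above is self-contained.
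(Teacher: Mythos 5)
Your proof is correct. Note first that the paper does not actually prove this proposition --- it is recalled from Gessel and Stembridge ``without proofs'' --- but the paper does prove the weighted generalization, Equation \eqref{eq : UU}, of which this statement is the special case where all weights equal $1$, so that is the natural point of comparison. Your opening step, identifying the product with $\Gamma_{\mathcal{Z}}$ of the disjoint union $Q$ of the two chains (with the second chain's labels shifted by $n$, which preserves their relative order and hence the enrichment conditions), coincides with the paper's. After that the routes diverge. You invoke the fundamental lemma of $\mathcal{Z}$-enriched $P$-partitions, decomposing $\mathcal{L}_{\mathcal{Z}}(Q)$ as a disjoint union over the linear extensions of $Q$, and your tie-breaking construction of $\prec_h$ (increasing labels along a run of a common value in $\PP$, decreasing labels along a run of a common value in $-\PP$) is the standard proof of that lemma; your checks that $\prec_h$ refines $<_Q$ and that the refining chain is unique are exactly the points that need care, and they go through. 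The paper instead argues recursively: it takes the first incomparable pair $\left(\pi_1,\, n+\sigma_1\right)$ and splits the sum defining $\Gamma_{\mathcal{Z}}$ into the two mutually exclusive, exhaustive cases $f(\pi_1)<f(n+\sigma_1)$ or $f(\pi_1)=f(n+\sigma_1)\in\PP$, versus $f(n+\sigma_1)<f(\pi_1)$ or $f(n+\sigma_1)=f(\pi_1)\in-\PP$; each case is precisely the defining condition for an enriched partition of the poset with one added covering relation in the corresponding direction (matching the label comparison $\pi_1<n+\sigma_1$), so the generating function splits into two generating functions of posets with one fewer incomparable pair, and iterating terminates at the chains indexed by the shuffles. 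The paper's recursion is more self-contained (no stand-alone fundamental lemma needed, and it carries over verbatim to the weighted setting), while your argument delivers the stronger bijective statement $\mathcal{L}_{\mathcal{Z}}(Q)=\bigsqcup_{\gamma\in\pi\shuffle\sigma}\mathcal{L}_{\mathcal{Z}}\left([n+m],<_\gamma\right)$ in one stroke, with the disjointness explicit rather than hidden in the exclusivity of the two cases at each step.
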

Using the notation of Proposition \ref{prop : LK}, specialisations of Equation \eqref{eq : GGG} to $\mathcal{Z} \in \{\PP,\PP^\pm\}$ give the classical results 
$$
L_{\pi}L_{\sigma} = \sum_{\gamma \in \pi \shuffle \sigma}L_{\gamma}, \qquad
K_{\pi}K_{\sigma} = \sum_{\gamma \in \pi \shuffle \sigma}K_{\gamma}.
$$
\section{The enriched monomial basis of $\QSym$}
\subsection{The enriched monomial functions}
We proceed with the definition of our extension of Hsiao's monomial peak functions $\eta_\al$ to arbitrary compositions $\al$ of $n$. When $\al \in \Odd(n)$ our definition of $\eta_{\alpha}$ coincides with the one given in \cite[(6.1)]{AguBerSot06} and differs only in sign from the one given in \cite{Hsi07} and Equation \eqref{eq : monpeak}. Indeed, we remove the factor $(-1)^{(n-\ell(\al))/2}$ that is not well-defined if $\al$ is not odd.
\begin{defn}[Enriched monomials]
\label{def.etaalpha}For any $n\in\NN$ and any composition $\alpha
\in \Comp(n)$, we define a quasisymmetric function
$\eta_{\alpha}\in\QSym$ by%
\begin{equation}
\eta_{\alpha}=\sum_{\substack{\beta\in \Comp(n);\\
\Des\left(  \beta\right)  \subseteq \Des\left(  \alpha\right)  }}2^{\ell\left(
\beta\right)  }M_{\beta}. \label{eq.def.etaalpha.def}%
\end{equation}

\end{defn}

\begin{example}
\textbf{(a)} Setting $n=5$ and $\alpha=\left(  1,3,1\right)  $ in this
definition, we obtain%
\begin{align*}
\eta_{\left(  1,3,1\right)  }  &  =\sum_{\substack{\beta
\in \Comp(5);\\\Des\left(  \beta\right)  \subseteq \Des\left(
1,3,1\right)  }}2^{\ell\left(  \beta\right)  }M_{\beta}=\sum_{\substack{\beta
\in \Comp(5);\\\Des\left(  \beta\right)  \subseteq
\left\{  1,4\right\}  }}2^{\ell\left(  \beta\right)  }M_{\beta}%
\ \ \ \ \ \ \ \ \ \ \left(  \text{since }\Des\left(  1,3,1\right)  =\left\{
1,4\right\}  \right) \\
&  =2^{\ell\left(  5\right)  }M_{\left(  5\right)  }+2^{\ell\left(
1,4\right)  }M_{\left(  1,4\right)  }+2^{\ell\left(  4,1\right)  }M_{\left(
4,1\right)  }+2^{\ell\left(  1,3,1\right)  }M_{\left(  1,3,1\right)  }%
\\
&=2M_{\left(  5\right)  }+4M_{\left(  1,4\right)
}+4M_{\left(  4,1\right)  }+8M_{\left(  1,3,1\right)  }.
\end{align*}

\textbf{(b)} For any positive integer $n$, we have $\eta_{\left(  n\right)
}=2M_{\left(  n\right)  }$
(since the composition $\left(n\right)$ satisfies
$\Des\left(n\right) = \varnothing$).
Likewise, the empty composition $\varnothing
=\left(  {}\right)  $ satisfies $\eta_{\varnothing}=M_{\varnothing}$.
\end{example}

The following are easy to see from our definition of $\eta_\alpha$:

\begin{proposition}
\label{prop.eta.through-x}Let $n\in\NN$ and $\alpha\in
\Comp(n)$. Then,
\[
\eta_{\alpha}=\sum_{\substack{i_{1}\leq i_{2}\leq\cdots\leq
i_{n}  ;\\i_j=i_{j+1}\text{ for each }j\in\left[  n-1\right]
\setminus \Des\left(  \alpha\right)  }}2^{\left\vert \left\{  i_{1},i_{2}%
,\ldots,i_{n}\right\}  \right\vert }x_{i_{1}}x_{i_{2}}\cdots x_{i_{n}}.
\]
\end{proposition}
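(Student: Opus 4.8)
The plan is to start from the defining equation \eqref{eq.def.etaalpha.def} for $\eta_\alpha$ and substitute the monomial expansion $M_\beta = \sum_{i_1 < \cdots < i_{\ell(\beta)}} x_{i_1}^{\beta_1} \cdots x_{i_{\ell(\beta)}}^{\beta_{\ell(\beta)}}$ into it, then reorganize the resulting sum to match the claimed expression. The key observation is that the right-hand side we want to prove is a sum over \emph{weakly} increasing tuples $(i_1, i_2, \dots, i_n)$ subject to the constraint that $i_j = i_{j+1}$ for every $j \in [n-1] \setminus \Des(\alpha)$; this constraint allows strict increases only at the positions in $\Des(\alpha)$. Such a weakly increasing tuple is determined by its set of distinct values together with the multiplicities, and the multiplicities are exactly read off from the gaps between consecutive elements of $\Des(\alpha) \cup \{0, n\}$.

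First I would set up the correspondence between monomials on the two sides. A monomial $x_{i_1} x_{i_2} \cdots x_{i_n}$ appearing on the proposed right-hand side (with $i_1 \leq \cdots \leq i_n$ and equalities forced outside $\Des(\alpha)$) corresponds to a strictly increasing tuple $j_1 < j_2 < \cdots < j_k$ of its distinct values, where $k = |\{i_1, \dots, i_n\}|$, and where $j_t$ is repeated a number of times equal to some positive integer $\beta_t$; these multiplicities form a composition $\beta = (\beta_1, \dots, \beta_k)$ of $n$. The condition $i_j = i_{j+1}$ for $j \notin \Des(\alpha)$ is precisely the condition that all the descent positions of $\beta$ lie inside $\Des(\alpha)$, i.e. $\Des(\beta) \subseteq \Des(\alpha)$. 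Under this correspondence the weight $x_{i_1} \cdots x_{i_n}$ becomes $x_{j_1}^{\beta_1} \cdots x_{j_k}^{\beta_k}$ and the exponent $2^{|\{i_1, \dots, i_n\}|}$ becomes $2^{k} = 2^{\ell(\beta)}$.

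Next I would carry out the reindexing explicitly. Grouping the terms of the proposed right-hand side by the composition $\beta$ of multiplicities, the inner sum over all strictly increasing tuples $j_1 < \cdots < j_{\ell(\beta)}$ with weight $2^{\ell(\beta)} x_{j_1}^{\beta_1} \cdots x_{j_{\ell(\beta)}}^{\beta_{\ell(\beta)}}$ is exactly $2^{\ell(\beta)} M_\beta$. Summing over all $\beta \in \Comp(n)$ with $\Des(\beta) \subseteq \Des(\alpha)$ then reproduces $\sum_{\beta} 2^{\ell(\beta)} M_\beta$, which is the definition of $\eta_\alpha$. This shows the two expressions agree as formal power series.

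I expect the only real subtlety to be the bookkeeping in the bijection between weakly increasing tuples and pairs (set of distinct values, composition of multiplicities): one must check carefully that the constraint $i_j = i_{j+1}$ for $j \notin \Des(\alpha)$ translates correctly into $\Des(\beta) \subseteq \Des(\alpha)$, noting that $j$ ranges over $[n-1]$ and the positions where a strict increase $i_j < i_{j+1}$ is \emph{permitted} are exactly the elements of $\Des(\alpha)$, while the positions where such a strict increase \emph{actually occurs} are the elements of $\Des(\beta)$. Everything else is a routine matching of monomials and their coefficients, so no genuine difficulty is anticipated beyond this indexing check.
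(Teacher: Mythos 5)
Your proof is correct and matches the paper's (implicit) approach: the paper simply notes that this proposition is ``easy to see from our definition of $\eta_\alpha$,'' and the intended argument is exactly your regrouping of the weakly increasing tuples by their multiplicity composition $\beta$, with the constraint $i_j=i_{j+1}$ for $j\in[n-1]\setminus\Des(\alpha)$ translating to $\Des(\beta)\subseteq\Des(\alpha)$ and the weight $2^{|\{i_1,\dots,i_n\}|}$ becoming $2^{\ell(\beta)}$. Nothing further is needed.
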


\begin{proposition}
\label{prop.eta.through-x2}Let $n \in \NN$
and $\alpha=\left(  \alpha_{1},\alpha_{2}%
,\ldots,\alpha_{p}\right)  \in \Comp(n)$. Then,
\[
\eta_{\alpha}=\sum_{i_{1}\leq i_{2}\leq\cdots\leq i_{p}}2^{\left\vert \left\{
i_{1},i_{2},\ldots,i_{p}\right\}  \right\vert }x_{i_{1}}^{\alpha_{1}}x_{i_{2}%
}^{\alpha_{2}}\cdots x_{i_{p}}^{\alpha_{p}}.
\]
\end{proposition}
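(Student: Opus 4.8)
The plan is to deduce Proposition \ref{prop.eta.through-x2} from the already-established Proposition \ref{prop.eta.through-x} by a simple regrouping of the summation indices. Both propositions express $\eta_\alpha$ as a sum of monomials weighted by powers of $2$; the first uses $n$ indices $i_1 \le \cdots \le i_n$ subject to equality constraints off the descent set of $\alpha$, while the target uses only $p = \ell(\alpha)$ indices $i_1 \le \cdots \le i_p$ with no constraints. The key observation is that the constraints in Proposition \ref{prop.eta.through-x} force the $n$-tuple to be constant on each of the $p$ blocks cut out by $\Des(\alpha)$, so that a weakly increasing constrained $n$-tuple is the same datum as an unconstrained weakly increasing $p$-tuple.

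Concretely, I would write $s_k = \alpha_1 + \cdots + \alpha_k$ for $0 \le k \le p$, so that $\Des(\alpha) = \{s_1, \ldots, s_{p-1}\}$ and the $k$-th block is $\{s_{k-1}+1, \ldots, s_k\}$, a set of $\alpha_k$ consecutive positions. The condition ``$i_j = i_{j+1}$ for each $j \in [n-1] \setminus \Des(\alpha)$'' says exactly that $i$ is constant on each block; I therefore set $j_k$ to be the common value $i_{s_{k-1}+1} = \cdots = i_{s_k}$ on the $k$-th block. Because the full tuple is weakly increasing, the block values satisfy $j_1 \le \cdots \le j_p$, and conversely every weakly increasing $p$-tuple $(j_1, \ldots, j_p)$ arises this way from a unique constrained $n$-tuple (repeat each $j_k$ exactly $\alpha_k$ times). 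This establishes a bijection between the two index sets.

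It then remains to check that the summands agree under this bijection. The monomial collapses as $x_{i_1} x_{i_2} \cdots x_{i_n} = x_{j_1}^{\alpha_1} x_{j_2}^{\alpha_2} \cdots x_{j_p}^{\alpha_p}$, since each $x_{j_k}$ is contributed $\alpha_k$ times. Moreover the set of distinct values is unchanged, $\{i_1, \ldots, i_n\} = \{j_1, \ldots, j_p\}$ --- here one uses that each block is nonempty (as $\alpha_k \ge 1$), so no value is lost upon passing to the block representatives --- whence the weights $2^{|\{i_1,\ldots,i_n\}|}$ and $2^{|\{j_1,\ldots,j_p\}|}$ coincide. Substituting into Proposition \ref{prop.eta.through-x} and reindexing the sum by $(j_1, \ldots, j_p)$ yields the claimed formula. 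I do not anticipate a genuine obstacle here; the only point demanding care is the bookkeeping showing that the equality constraints are equivalent to block-constancy and that passing to block representatives preserves the set of distinct values, both of which hinge on the parts $\alpha_k$ being strictly positive.
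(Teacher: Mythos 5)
Your argument is correct. The paper leaves both Proposition \ref{prop.eta.through-x} and Proposition \ref{prop.eta.through-x2} unproved (``easy to see from our definition''), and your block-collapsing bijection is exactly the intended verification: the constraint $i_j=i_{j+1}$ for $j\in[n-1]\setminus\Des(\alpha)$ is block-constancy, the block representatives form a weakly increasing $p$-tuple, and positivity of the parts $\alpha_k$ guarantees that the value set (hence the power of $2$) is preserved. The only remark worth adding is that one can just as easily bypass Proposition \ref{prop.eta.through-x} and derive the claim directly from (\ref{eq.def.etaalpha.def}): grouping the terms of $\sum_{i_1\leq\cdots\leq i_p}2^{\left\vert\left\{i_1,\ldots,i_p\right\}\right\vert}x_{i_1}^{\alpha_1}\cdots x_{i_p}^{\alpha_p}$ according to which adjacent indices coincide produces precisely $\sum_{\beta}2^{\ell\left(\beta\right)}M_{\beta}$ over compositions $\beta$ with $\Des\left(\beta\right)\subseteq\Des\left(\alpha\right)$; this is the same bookkeeping run in the other direction, so your dependence on Proposition \ref{prop.eta.through-x} is harmless but not essential.
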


We can also write the $\eta_{\alpha}$ in the fundamental basis
(generalizing \cite[Proposition 2.2]{Hsi07}):

\begin{proposition}
\label{prop.eta.through-F}Let $n$ be a positive integer. Let $\alpha
\in \Comp(n)$. Then,%
\[
\eta_{\alpha}=2\sum_{\gamma\in \Comp(n)}\left(
-1\right)  ^{\left\vert \Des\left(  \gamma\right)  \setminus \Des\left(
\alpha\right)  \right\vert }L_{\gamma}.
\]

\end{proposition}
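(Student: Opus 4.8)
The plan is to derive this identity purely formally from Definition~\ref{def.etaalpha} and the transition rule~\eqref{eq : LM}, passing through the monomial basis and inverting. First I would record two bookkeeping facts: for any composition $\beta\in\Comp(n)$ one has $\ell(\beta)=\left|\Des(\beta)\right|+1$, so that $2^{\ell(\beta)}=2\cdot 2^{\left|\Des(\beta)\right|}$, and the transition~\eqref{eq : LM}, read through the order-preserving bijection $\beta\mapsto\Des(\beta)$ between $\Comp(n)$ and subsets of $[n-1]$, says exactly $L_\gamma=\sum_{\Des(\gamma)\subseteq\Des(\delta)}M_\delta$. Möbius inversion on the Boolean lattice of subsets of $[n-1]$ then gives the inverse expansion
\[
M_\beta=\sum_{\substack{\gamma\in\Comp(n);\\ \Des(\beta)\subseteq\Des(\gamma)}}(-1)^{\left|\Des(\gamma)\setminus\Des(\beta)\right|}L_\gamma .
\]

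Next I would substitute this into Definition~\ref{def.etaalpha}, pull out the factor $2$, and interchange the two summations so that $\gamma$ becomes the outer index. Since the two constraints $\Des(\beta)\subseteq\Des(\alpha)$ and $\Des(\beta)\subseteq\Des(\gamma)$ combine into $\Des(\beta)\subseteq\Des(\alpha)\cap\Des(\gamma)$, the result is
\[
\eta_\alpha=2\sum_{\gamma\in\Comp(n)}L_\gamma\!\!\sum_{\substack{\beta\in\Comp(n);\\ \Des(\beta)\subseteq\Des(\alpha)\cap\Des(\gamma)}}\!\!2^{\left|\Des(\beta)\right|}(-1)^{\left|\Des(\gamma)\setminus\Des(\beta)\right|}.
\]
Writing $A=\Des(\alpha)$, $C=\Des(\gamma)$ and letting $B=\Des(\beta)$ run over the subsets of $A\cap C$, the inner sum becomes $\sum_{B\subseteq A\cap C}2^{\left|B\right|}(-1)^{\left|C\right|-\left|B\right|}$, since $B\subseteq C$ forces $\left|C\setminus B\right|=\left|C\right|-\left|B\right|$.

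The heart of the argument, and the step that needs the most care, is evaluating this inner alternating sum. Factoring out $(-1)^{\left|C\right|}$ and using $(-1)^{-\left|B\right|}=(-1)^{\left|B\right|}$ turns it into $(-1)^{\left|C\right|}\sum_{B\subseteq A\cap C}(-2)^{\left|B\right|}$, and the binomial identity $\sum_{B\subseteq S}(-2)^{\left|B\right|}=(1-2)^{\left|S\right|}=(-1)^{\left|S\right|}$ collapses it to $(-1)^{\left|C\right|}(-1)^{\left|A\cap C\right|}=(-1)^{\left|C\right|+\left|A\cap C\right|}$. It then remains only to match signs: since $\left|C\setminus A\right|=\left|C\right|-\left|A\cap C\right|$, we have $(-1)^{\left|C\right|+\left|A\cap C\right|}=(-1)^{\left|C\right|-\left|A\cap C\right|}=(-1)^{\left|\Des(\gamma)\setminus\Des(\alpha)\right|}$, which is exactly the coefficient claimed. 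I expect the only genuine pitfall to be the sign bookkeeping in this last computation, together with remembering that the hypothesis $n\geq 1$ is what makes $\ell(\beta)=\left|\Des(\beta)\right|+1$ hold, so that the global factor $2$ appears; the Möbius inversion and the interchange of sums are routine.
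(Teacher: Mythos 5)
Your proof is correct. It is also a genuinely different route from the one the paper intends: the paper's proof starts from the claimed right-hand side, expands each $L_\gamma$ into monomials via \eqref{eq : LM}, and then identifies the coefficient of $M_\beta$ using Lemma \ref{lem.eta.through-F.lem1}, namely $\sum_{I\subseteq S}(-1)^{\left|I\setminus T\right|}=2^{\left|S\right|}$ if $S\subseteq T$ and $0$ otherwise (applied with $S=\Des(\beta)$, $T=\Des(\alpha)$, $I=\Des(\gamma)$); the case distinction in that lemma is precisely what produces the support condition $\Des(\beta)\subseteq\Des(\alpha)$ in Definition \ref{def.etaalpha}. You instead start from the left-hand side, invert \eqref{eq : LM} by M\"obius inversion to write $M_\beta$ in the fundamental basis, and evaluate the resulting inner sum via $\sum_{B\subseteq S}(-2)^{\left|B\right|}=(-1)^{\left|S\right|}$; no case distinction is needed because that sum never vanishes. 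The paper's direction is slightly shorter (it avoids the inversion step and isolates the whole combinatorial content in one reusable lemma, which is why the paper states the lemma separately), while yours has the minor advantage of not requiring one to guess the answer in advance: it derives the $L$-expansion directly from the definition. Your sign bookkeeping checks out, including the observation that $\left|C\setminus B\right|=\left|C\right|-\left|B\right|$ for $B\subseteq C$ and that $(-1)^{\left|C\right|+\left|A\cap C\right|}=(-1)^{\left|C\setminus A\right|}$, and you correctly flag that $\ell(\beta)=\left|\Des(\beta)\right|+1$ is exactly where the hypothesis $n\geq 1$ enters.
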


This proposition  can be proved using the following simple identity:
\begin{lem}
\label{lem.eta.through-F.lem1}Let $S$ and $T$ be two finite sets. Then,%
\[
\sum_{I\subseteq S}\left(  -1\right)  ^{\left\vert I\setminus T\right\vert }=
\begin{cases}
2^{\left\vert S\right\vert}, & \text{if }S\subseteq T;\\
0, & \text{otherwise}.
\end{cases}
\\
\]
\end{lem}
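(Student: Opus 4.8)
The statement to prove is Lemma~\ref{lem.eta.through-F.lem1}: for two finite sets $S$ and $T$, we have
\[
\sum_{I\subseteq S}\left(-1\right)^{\left\vert I\setminus T\right\vert}=
\begin{cases}
2^{\left\vert S\right\vert}, & \text{if }S\subseteq T;\\
0, & \text{otherwise.}
\end{cases}
\]

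I need to sketch a proof of this elementary identity about alternating sums over subsets.

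**The key observation.**

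The sign $(-1)^{|I \setminus T|}$ only depends on how $I$ meets the complement of $T$. Write $S = (S \cap T) \sqcup (S \setminus T)$. An arbitrary subset $I \subseteq S$ decomposes uniquely as $I = A \sqcup B$ where $A = I \cap T \subseteq S \cap T$ and $B = I \setminus T \subseteq S \setminus T$. Crucially, $I \setminus T = B$, so the sign depends only on $B$, not on $A$.

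So the plan would be to factor the sum as a product over these two independent choices:
\[
\sum_{I\subseteq S}(-1)^{|I\setminus T|} = \left(\sum_{A \subseteq S\cap T} 1\right)\left(\sum_{B \subseteq S\setminus T}(-1)^{|B|}\right).
\]

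**Evaluating the two factors.**

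The first factor is just $2^{|S\cap T|}$, the number of subsets of $S \cap T$. For the second factor, I'd use the standard fact that for any finite set $U$, $\sum_{B\subseteq U}(-1)^{|B|}$ equals $1$ if $U=\varnothing$ and $0$ otherwise (this is the binomial identity $\sum_{k}\binom{|U|}{k}(-1)^k = (1-1)^{|U|} = [U = \varnothing]$). Apply this with $U = S \setminus T$. When $S \setminus T = \varnothing$, i.e. $S \subseteq T$, the second factor is $1$ and the whole sum collapses to $2^{|S\cap T|} = 2^{|S|}$. When $S \setminus T \neq \varnothing$, i.e. $S \not\subseteq T$, the second factor vanishes and so does the whole sum. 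This matches both cases of the claimed formula, completing the proof.

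**Obstacles.** There is essentially no difficulty here; the only thing to be careful about is the bookkeeping of the bijective factorization $I \leftrightarrow (A, B)$, making sure it is genuinely a bijection between subsets of $S$ and pairs $(A,B) \in 2^{S\cap T}\times 2^{S\setminus T}$, and that $|I\setminus T| = |B|$ under it. Once that is set up, both resulting sums are classical and immediate, so I expect no real obstacle in carrying this out.
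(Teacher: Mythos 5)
Your proof is correct. The paper itself gives no proof of this lemma (it is only described as ``a simple identity'' in this extended abstract), but your argument --- splitting each $I\subseteq S$ uniquely as $A\sqcup B$ with $A\subseteq S\cap T$ and $B\subseteq S\setminus T$, noting that the sign depends only on $B$, factoring the sum, and evaluating the alternating factor via $\sum_{B\subseteq U}(-1)^{\left\vert B\right\vert}=\left[U=\varnothing\right]$ --- is the standard and expected route, and every step checks out.
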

\subsection{The $\eta_{\alpha}$ as a basis}
\begin{theorem}
\label{theorem.eta.basis}Assume that $2$ is invertible in $\kk$. Then, the
family $\left(  \eta_{\alpha}\right)  _{n \in \NN,\ \alpha\in\Comp(n)}$ is a
basis of the $\kk$-module $\QSym$.
\end{theorem}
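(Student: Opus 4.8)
The plan is to show that the transition matrix between the family $\left(\eta_{\alpha}\right)$ and the monomial basis $\left(M_{\alpha}\right)$ is invertible over $\kk$. Since both families are indexed by the same set $\bigsqcup_{n\in\NN}\Comp(n)$, and since $\QSym$ is graded with $\left(M_{\alpha}\right)_{\alpha\in\Comp(n)}$ a basis of the $n$-th homogeneous component, it suffices to work degree by degree: fix $n\in\NN$ and prove that $\left(\eta_{\alpha}\right)_{\alpha\in\Comp(n)}$ is a $\kk$-basis of the $\kk$-module $\QSym_n$ spanned by $\left(M_{\alpha}\right)_{\alpha\in\Comp(n)}$.

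The key observation is that the bijection $\alpha\mapsto\Des(\alpha)$ identifies $\Comp(n)$ with the subsets of $[n-1]$, and under this identification the defining Equation \eqref{eq.def.etaalpha.def} reads
\[
\eta_{\alpha}=\sum_{\substack{\beta\in\Comp(n);\\ \Des(\beta)\subseteq\Des(\alpha)}}2^{\ell(\beta)}M_{\beta}.
\]
So the transition matrix has entries indexed by pairs of subsets $\left(\Des(\alpha),\Des(\beta)\right)$, with the $\left(\alpha,\beta\right)$ entry equal to $2^{\ell(\beta)}$ when $\Des(\beta)\subseteq\Des(\alpha)$ and $0$ otherwise. The first step is to note that $2^{\ell(\beta)}$ depends only on $\beta$, so this matrix factors as $U\cdot D$, where $D$ is the diagonal matrix with entries $2^{\ell(\beta)}$ and $U$ is the incidence (zeta-type) matrix of the Boolean lattice of subsets of $[n-1]$, whose $\left(\alpha,\beta\right)$ entry is $1$ if $\Des(\beta)\subseteq\Des(\alpha)$ and $0$ otherwise.

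The second step is to argue that each factor is invertible over $\kk$. The diagonal matrix $D$ is invertible precisely because $2$ is assumed invertible in $\kk$: its determinant is a product of powers of $2$, and each entry $2^{\ell(\beta)}$ has inverse $2^{-\ell(\beta)}\in\kk$. The zeta matrix $U$ of a finite poset is lower-unitriangular with respect to any linear extension of the inclusion order (its diagonal entries are $1$, and it has zeros above the diagonal), hence its determinant is $1$ and it is invertible over any ring; equivalently, its inverse is the Möbius matrix of the Boolean lattice, whose entries $\left(-1\right)^{\left\vert\Des(\alpha)\setminus\Des(\beta)\right\vert}$ lie in $\kk$. Therefore the product $U\cdot D$ is invertible, which shows that $\left(\eta_{\alpha}\right)_{\alpha\in\Comp(n)}$ spans the same $\kk$-module as $\left(M_{\alpha}\right)_{\alpha\in\Comp(n)}$ and is $\kk$-linearly independent.

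I do not expect a genuine obstacle here, as the argument is essentially a triangularity-plus-scaling computation; the only hypothesis that does real work is the invertibility of $2$, needed solely to invert $D$. The one point requiring care is bookkeeping: one must order $\Comp(n)$ by a linear extension of subset-inclusion on descent sets so that $U$ is honestly unitriangular, and one must make sure the factorization $U\cdot D$ is correct, i.e. that the scalar $2^{\ell(\beta)}$ is attached to the summation index $\beta$ rather than to $\alpha$. Alternatively, and perhaps more cleanly, one can bypass matrices entirely and invert the relation by Möbius inversion on the Boolean lattice using Lemma \ref{lem.eta.through-F.lem1}, obtaining an explicit formula $2^{\ell(\alpha)}M_{\alpha}=\sum_{\Des(\beta)\subseteq\Des(\alpha)}\left(-1\right)^{\left\vert\Des(\alpha)\setminus\Des(\beta)\right\vert}\eta_{\beta}$, which exhibits each $M_{\alpha}$ as a $\kk$-linear combination of the $\eta_{\beta}$ (using $2^{-\ell(\alpha)}\in\kk$) and thereby proves that the $\eta$'s span; linear independence then follows from the count $\left\vert\Comp(n)\right\vert=2^{n-1}=\dim_{\kk}\QSym_n$ together with the spanning property.
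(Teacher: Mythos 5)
Your proof is correct and follows essentially the same route as the paper, which likewise establishes the result by a triangularity argument on the transition matrix from Definition \ref{def.etaalpha} (unitriangular zeta matrix of the Boolean lattice times a diagonal matrix of powers of $2$, invertible since $2$ is invertible in $\kk$) and records the M\"obius-inversion formula you mention as Proposition \ref{prop.eta.M-through-eta}. No gaps.
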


This follows easily from the definition of $\eta_\alpha$
by a triangularity argument. Using M\"obius inversion, we can ``turn (\ref{eq.def.etaalpha.def}) around'',
obtaining an explicit expansion of the
monomial quasisymmetric functions $M_{\beta}$ in
the basis $\left(  \eta_{\alpha}\right)  _{n\in \NN, \ \alpha\in\Comp(n)}$:

\begin{proposition}
\label{prop.eta.M-through-eta}Let $n\in\NN$. Let $\beta\in
\Comp(n)$ be a composition. Then,%
\[
2^{\ell\left(  \beta\right)  }M_{\beta}=\sum_{\substack{\alpha\in
\Comp(n);\\\Des\left(  \alpha\right)  \subseteq \Des\left(
\beta\right)  }}\left(  -1\right)  ^{\ell\left(  \beta\right)  -\ell\left(
\alpha\right)  }\eta_{\alpha}.
\]

\end{proposition}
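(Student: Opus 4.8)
The plan is to prove Proposition~\ref{prop.eta.M-through-eta} by Möbius inversion on a suitable poset of compositions, using Definition~\ref{def.etaalpha} as the relation to be inverted. Recall that \eqref{eq.def.etaalpha.def} expresses each $\eta_\alpha$ as a sum of $2^{\ell(\beta)}M_\beta$ over all $\beta$ with $\Des(\beta)\subseteq\Des(\alpha)$. Under the bijection $\al\mapsto\Des(\al)$ between $\Comp(n)$ and subsets of $[n-1]$, this is precisely a summation over the Boolean lattice $2^{[n-1]}$: writing $a_S := \eta_\alpha$ and $b_S := 2^{\ell(\beta)}M_\beta$ where $S=\Des(\alpha)$ and $S=\Des(\beta)$ respectively, \eqref{eq.def.etaalpha.def} reads $a_S=\sum_{T\subseteq S}b_T$. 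The classical subset Möbius inversion then yields $b_S=\sum_{T\subseteq S}(-1)^{|S\setminus T|}a_T$, which is exactly the claimed identity once we translate signs back into composition length.

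First I would fix $n$ and the target composition $\beta$, set $S=\Des(\beta)$, and restrict to compositions $\alpha$ with $\Des(\alpha)\subseteq S$; these correspond bijectively to subsets $T\subseteq S$ via $T=\Des(\alpha)$. Next I would record the length bookkeeping: for a composition $\gamma\in\Comp(n)$ one has $\ell(\gamma)=|\Des(\gamma)|+1$, so that $\ell(\beta)-\ell(\alpha)=|\Des(\beta)|-|\Des(\alpha)|=|S|-|T|=|S\setminus T|$ whenever $T\subseteq S$. This identifies the sign $(-1)^{\ell(\beta)-\ell(\alpha)}$ appearing in the statement with the Möbius sign $(-1)^{|S\setminus T|}$ of the Boolean lattice, so the two formulations coincide term by term.

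Then I would invoke Möbius inversion on the Boolean lattice $2^{[n-1]}$ (equivalently, a two-line application of the binomial identity $\sum_{T\subseteq U}(-1)^{|U\setminus T|}=[U=\varnothing]$, which is the content of Lemma~\ref{lem.eta.through-F.lem1} in the special case $T=\varnothing$). Applying it to the relation $a_S=\sum_{T\subseteq S}b_T$ gives $b_S=\sum_{T\subseteq S}(-1)^{|S\setminus T|}a_T$. Re-expressing $a_T=\eta_\alpha$ and $b_S=2^{\ell(\beta)}M_\beta$, and substituting the sign identity from the previous step, produces exactly
\[
2^{\ell(\beta)}M_\beta=\sum_{\substack{\alpha\in\Comp(n);\\ \Des(\alpha)\subseteq\Des(\beta)}}(-1)^{\ell(\beta)-\ell(\alpha)}\eta_\alpha,
\]
as desired.

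There is essentially no hard step here: the result is a formal inversion of a triangular-with-respect-to-inclusion relation, and the only point requiring care is the translation between the two sign conventions, i.e. verifying $\ell(\beta)-\ell(\alpha)=|\Des(\beta)\setminus\Des(\alpha)|$ under the hypothesis $\Des(\alpha)\subseteq\Des(\beta)$. One should note that this proposition does \emph{not} require $2$ to be invertible in $\kk$, since it is an identity among integer combinations of the $M_\beta$ and $\eta_\alpha$; invertibility of $2$ is only needed for Theorem~\ref{theorem.eta.basis} (to guarantee that the $2^{\ell(\beta)}$ factors can be divided out so that the $M_\beta$, and hence the $\eta_\alpha$, span). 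I would present the Boolean-lattice Möbius inversion as the clean conceptual core and relegate the length--cardinality sign computation to a single displayed line.
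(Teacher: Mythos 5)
Your proof is correct and follows exactly the route the paper indicates: M\"obius inversion over the Boolean lattice $2^{[n-1]}$ applied to the defining relation \eqref{eq.def.etaalpha.def}, with the sign translation $\ell(\beta)-\ell(\alpha)=\left\vert \Des(\beta)\setminus\Des(\alpha)\right\vert$ coming from $\ell(\gamma)=\left\vert\Des(\gamma)\right\vert+1$. Your observation that invertibility of $2$ is not needed here is also a correct and worthwhile remark.
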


\subsection{The antipode of $\eta_{\alpha}$}
The \emph{antipode} of $\QSym$ is a certain $\kk$-linear
map $S:\QSym\rightarrow\QSym$ that is
part of the Hopf algebra structure of $\QSym$
(see \cite[Chapter 5]{GriRei20}),
but can also be defined directly.
Namely, $S : \QSym \to \QSym$ is the unique $\kk$-linear map that satisfies
\[
S\left(  M_{\alpha}\right)  =\left(  -1\right)  ^{\ell}\sum_{\substack{\gamma
\in \Comp(n);\\\Des\left(  \gamma\right)  \subseteq
\Des\left(  \alpha_{\ell},\alpha_{\ell-1},\ldots,\alpha_{1}\right)  }}M_{\gamma
}
\]
for any $n\in\NN$ and any $\alpha=\left(
\alpha_{1},\alpha_{2},\ldots,\alpha_{\ell}\right)  \in \Comp(n)$.
Also,
for each composition $\alpha$, we have $S\left(  L_{\alpha}\right)  =\left(
-1\right)  ^{\left\vert \alpha\right\vert }L_{\omega\left(  \alpha\right)  }$,
where $\omega\left(  \alpha\right)  $ is a certain composition known as the
\emph{complement} of $\alpha$. See \cite[Theorem 5.1.11 and Proposition
5.2.15]{GriRei20} for details and proofs. Note that $S$ is a
$\kk$-algebra homomorphism and an involution (that is, $S^{2} = \id$).

\begin{defn}
If $\alpha=\left(  \alpha_{1},\alpha_{2},\ldots,\alpha_{\ell}\right)  $ is a
composition, then the \emph{reversal} of $\alpha$ is defined to be the
composition $\left(  \alpha_{\ell},\alpha_{\ell-1},\ldots,\alpha_{1}\right)
$. It is denoted by $\operatorname*{rev}\alpha$.
\end{defn}

\begin{proposition}
\label{prop.eta.S}Let $n \in \NN$ and $\alpha\in\Comp(n)$. Then, the antipode $S$
of $\QSym$ satisfies
\[
S\left(  \eta_{\alpha}\right)  =\left(  -1\right)  ^{\ell\left(
\alpha\right)  }\eta_{\operatorname*{rev}\alpha}.
\]

\end{proposition}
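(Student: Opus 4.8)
The plan is to compute $S(\eta_\alpha)$ by expanding $\eta_\alpha$ in the monomial basis, applying the known antipode formula on $M_\beta$, and then recognizing the result as $(-1)^{\ell(\alpha)}\eta_{\operatorname{rev}\alpha}$. Starting from the definition \eqref{eq.def.etaalpha.def}, I would write
\[
S\left(\eta_\alpha\right)=\sum_{\substack{\beta\in\Comp(n);\\ \Des(\beta)\subseteq\Des(\alpha)}}2^{\ell(\beta)}\,S\left(M_\beta\right),
\]
and then substitute the explicit formula for $S(M_\beta)$ given in the excerpt. This turns the right-hand side into a double sum over pairs $(\beta,\gamma)$, where $\gamma$ ranges over compositions with $\Des(\gamma)\subseteq\Des(\operatorname{rev}\beta)$. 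Since each $S(M_\beta)$ contributes a sign $(-1)^{\ell(\beta)}$, which exactly cancels the factor arising once I track parities carefully, the combined coefficient is a power of $2$ times a sign, and the whole expression collects into $M_\gamma$-coefficients.

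The main technical work is the reindexing. The cleanest route is to use the complementation on descent sets: for a composition $\beta\in\Comp(n)$, the condition $\Des(\beta)\subseteq\Des(\alpha)$ is in bijection (via the reversal and the order-reversing complement of subsets of $[n-1]$) with a dual condition expressible through $\Des(\operatorname{rev}\alpha)$. I would exploit that $\Des(\operatorname{rev}\beta)=\{n-s:\ s\in\Des(\beta)\}$, so that the involution $i\mapsto n-i$ on $[n-1]$ converts the nested inclusions in the $(\beta,\gamma)$ double sum into inclusions of the form $\Des(\delta)\subseteq\Des(\operatorname{rev}\alpha)$ for a suitably substituted index $\delta$. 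After swapping the order of summation so that $\gamma$ (renamed) becomes the outer variable, the inner sum over the intermediate composition becomes a signed count over subsets sandwiched between two fixed descent sets.

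At that stage the inner sum is precisely of the shape handled by Lemma \ref{lem.eta.through-F.lem1} (or an equally elementary alternating-sum-over-an-interval-of-subsets identity): summing $(-1)^{|I\setminus T|}$ or $(-1)^{|I|}$ over all $I$ in a Boolean interval collapses to either $0$ or a power of $2$. This is what forces the surviving terms to match the defining sum of $\eta_{\operatorname{rev}\alpha}$, and the bookkeeping of $\ell(\beta)$ versus $\ell$ of the surviving composition produces the global sign $(-1)^{\ell(\alpha)}$. I expect the parity/sign tracking through the reversal and complement to be the only real obstacle: one must verify that $\ell(\operatorname{rev}\alpha)=\ell(\alpha)$ (immediate) and that the accumulated signs from $S(M_\beta)$ and from the inclusion-reversal combine correctly. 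As a sanity check I would test the formula on $\eta_{(n)}=2M_{(n)}$, where $\operatorname{rev}(n)=(n)$ and $S(M_{(n)})=(-1)^1\sum_{\Des(\gamma)=\varnothing}M_\gamma=-M_{(n)}$, giving $S(\eta_{(n)})=-2M_{(n)}=(-1)^{\ell(n)}\eta_{(n)}$, consistent with the claim. An alternative, perhaps slicker, route is to note that $S$ is an algebra anti-involution and to compare with Hsiao's antipode computation for odd $\alpha$ from \cite{Hsi07}, but since the statement is purely linear in $\eta_\alpha$, the direct monomial-basis computation above is the most self-contained.
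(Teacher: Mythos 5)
Your route is correct, but it is genuinely different from the paper's. The paper derives Proposition \ref{prop.eta.S} from Proposition \ref{prop.eta.through-F}: one applies $S$ to the expansion $\eta_{\alpha}=2\sum_{\gamma}(-1)^{|\Des(\gamma)\setminus\Des(\alpha)|}L_{\gamma}$, uses $S(L_{\gamma})=(-1)^{n}L_{\omega(\gamma)}$, and reindexes by the complement $\omega$; this is a single substitution with sign bookkeeping and no interchange of summations. You instead stay in the monomial basis, which is more self-contained (you need neither Proposition \ref{prop.eta.through-F} nor the complement map $\omega$, only the stated formula for $S(M_{\beta})$), at the cost of a double sum that must be collapsed. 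Your plan does go through: writing $A=\Des(\alpha)$, $B=\Des(\beta)$, and using $\Des(\gamma)\subseteq\Des(\operatorname{rev}\beta)\iff n-\Des(\gamma)\subseteq B$ where $n-T:=\{n-t:t\in T\}$, the coefficient of $M_{\gamma}$ in $S(\eta_{\alpha})$ becomes $\sum_{C\subseteq B\subseteq A}(-2)^{|B|+1}$ with $C=n-\Des(\gamma)$, which equals $(-1)^{|A|-|C|}(-2)^{|C|+1}=(-1)^{\ell(\alpha)}2^{\ell(\gamma)}$ when $C\subseteq A$ (equivalently $\Des(\gamma)\subseteq\Des(\operatorname{rev}\alpha)$) and vanishes otherwise, exactly reproducing $(-1)^{\ell(\alpha)}\eta_{\operatorname{rev}\alpha}$. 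One small correction: the inner sum you need is the weighted Boolean-interval identity $\sum_{C\subseteq B\subseteq A}(-2)^{|B|}=(-2)^{|C|}(1-2)^{|A|-|C|}$, not Lemma \ref{lem.eta.through-F.lem1} itself (whose summand is $(-1)^{|I\setminus T|}$ over \emph{all} subsets of $S$); your hedge about "an equally elementary alternating-sum-over-an-interval-of-subsets identity" is the one you should actually invoke, and the sign-tracking you defer is precisely the computation displayed above.
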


Proposition \ref{prop.eta.S}
follows easily
from Proposition \ref{prop.eta.through-F}.
It generalizes \cite[Proposition 2.9]{Hsi07}.

\subsection{The coproduct of $\eta_{\alpha}$}

Next, consider the coproduct $\Delta:\operatorname*{QSym}\rightarrow
\operatorname*{QSym}\otimes\operatorname*{QSym}$ of the Hopf algebra
$\operatorname*{QSym}$ (see \cite[\S 5.1]{GriRei20}). We
claim the following formula for $\Delta\left(  \eta_{\alpha}\right)  $ that generalizes \cite[Cor. 2.7]{Hsi07}.

\begin{theorem}
\label{thm.Delta-eta}Let $\alpha = \left(\al_1, \al_2, \ldots, \al_p\right)$
be a composition. Then,
\[
\Delta\left(  \eta_{\alpha}\right)  =\sum_{k=0}^p
\eta_{\left(\al_1, \al_2, \ldots, \al_k\right)} \otimes
\eta_{\left(\al_{k+1}, \al_{k+2}, \ldots, \al_p\right)} .
\]

\end{theorem}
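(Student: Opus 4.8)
The plan is to reduce the coproduct of $\eta_\alpha$ to the already-known coproduct of the monomial basis $M_\beta$, which is given by the classical deconcatenation rule
\[
\Delta\left(M_\beta\right) = \sum_{k=0}^{\ell(\beta)} M_{\left(\beta_1,\ldots,\beta_k\right)} \otimes M_{\left(\beta_{k+1},\ldots,\beta_{\ell(\beta)}\right)}.
\]
Applying $\Delta$ to the defining formula \eqref{eq.def.etaalpha.def} and using this rule, I would write
\[
\Delta\left(\eta_\alpha\right)
= \sum_{\substack{\beta\in\Comp(n);\\ \Des(\beta)\subseteq\Des(\alpha)}}
2^{\ell(\beta)}
\sum_{k=0}^{\ell(\beta)}
M_{\left(\beta_1,\ldots,\beta_k\right)} \otimes M_{\left(\beta_{k+1},\ldots,\beta_{\ell(\beta)}\right)}.
\]
The goal is to match this against the claimed right-hand side, which (again expanding each $\eta$ via \eqref{eq.def.etaalpha.def}) is a sum over splitting points $k\in\{0,1,\ldots,p\}$ of $\alpha$ and over pairs of compositions $(\gamma,\delta)$ refining the two halves $\left(\al_1,\ldots,\al_k\right)$ and $\left(\al_{k+1},\ldots,\al_p\right)$ in the descent-set sense.

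The heart of the argument is therefore a bijective bookkeeping between the two index sets. First I would fix the degree $m$ of the left tensor factor, so that a term $M_{\gamma}\otimes M_{\delta}$ has $|\gamma|=m$ and $|\delta|=n-m$. On the left side, a composition $\beta$ with $\Des(\beta)\subseteq\Des(\alpha)$ together with a split position contributing to this degree amounts to choosing $\beta$ and cutting it after the part whose partial sums reach $m$; crucially, to land in degree exactly $m$ one must cut $\beta$ at the value $m$, which forces $m\in\Des(\beta)\cup\{0,n\}$. The key observation is that $\Des(\beta)\subseteq\Des(\alpha)$ means every such cut point $m$ lies in $\Des(\alpha)\cup\{0,n\}$, i.e.\ $m$ is itself a partial sum of $\alpha$, say $m=\al_1+\cdots+\al_k$. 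Thus the admissible splits of $\beta$ are in natural correspondence with the splits $k$ of $\alpha$, and once $k$ is fixed the condition $\Des(\beta)\subseteq\Des(\alpha)$ decouples as the conjunction of $\Des(\gamma)\subseteq\Des\left(\al_1,\ldots,\al_k\right)$ and $\Des(\delta)\subseteq\Des\left(\al_{k+1},\ldots,\al_p\right)$, where $\gamma$ and $\delta$ are the two pieces of $\beta$. The factor $2^{\ell(\beta)}=2^{\ell(\gamma)}\cdot 2^{\ell(\delta)}$ splits accordingly, which is exactly the product of the two monomial-coefficient weights appearing in $\eta_{\left(\al_1,\ldots,\al_k\right)}\otimes\eta_{\left(\al_{k+1},\ldots,\al_p\right)}$.

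I expect the main obstacle to be the careful handling of the cut points, specifically the interplay between the inner deconcatenation index of $M_\beta$ and the outer index $k$ of $\alpha$. The subtle point is that for a fixed refinement $\beta$ of $\alpha$, a given degree $m$ may fail to be a partial sum of $\beta$ (in which case $\beta$ contributes no term in that degree), and one must verify that the only degrees $m$ that do occur are precisely the partial sums $\al_1+\cdots+\al_k$ of $\alpha$ itself—so that each outer split point $k$ is hit exactly once and the two resulting refinement conditions are genuinely independent. Once this decoupling is established, the double sum factors as a product over the two halves, and after re-folding each factor back into the $\eta$ notation via \eqref{eq.def.etaalpha.def} the claimed identity follows. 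An alternative, and perhaps cleaner, route would be to invoke Proposition~\ref{prop.eta.through-F} to pass to the fundamental basis, whose coproduct $\Delta(L_\gamma)=\sum_{\gamma=\gamma'\gamma''}L_{\gamma'}\otimes L_{\gamma''}$ (summing over concatenation factorizations, including the one split inside a part) is likewise multiplicative under deconcatenation; however, the sign bookkeeping from the $(-1)^{|\Des(\gamma)\setminus\Des(\alpha)|}$ factors makes the monomial-basis computation above more transparent, so I would present that one as the primary proof.
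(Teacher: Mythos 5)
Your argument is correct. Note that the paper states Theorem~\ref{thm.Delta-eta} without proof (this is an extended abstract, with details deferred to the full version), so there is no in-paper argument to compare against; but your route --- expand $\eta_\alpha$ via \eqref{eq.def.etaalpha.def}, apply the deconcatenation coproduct of the $M_\beta$, and regroup --- is the natural one, and your key observations are exactly what make it work: every cut point $m$ of a $\beta$ with $\Des(\beta)\subseteq\Des(\alpha)$ is a partial sum $\al_1+\cdots+\al_k$ of $\alpha$ (so $k$ is uniquely determined by $m$), and since $m\in\Des(\alpha)\cup\{0,n\}$ the containment $\Des(\beta)\subseteq\Des(\alpha)$ decouples into the two containments for the pieces $\gamma$ and $\delta$, while $2^{\ell(\beta)}=2^{\ell(\gamma)}2^{\ell(\delta)}$. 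One terminological slip: a composition $\beta$ with $\Des(\beta)\subseteq\Des(\alpha)$ is a \emph{coarsening} of $\alpha$, not a refinement; this does not affect the argument, since you always work with the descent-set containment itself.
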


\section{The product rule for the enriched monomial basis}
To get a closed form formula for the product of two enriched monomial functions, we introduce a variant of enriched $P$-partitions with additional weights on the posets. 
\subsection{Weighted posets}
\label{sec : expposet}
\begin{defn}
A \emph{labelled weighted poset} is a triple $P = ([n], <_P, \epsilon)$ where $([n], <_P)$ is a labelled poset and $\epsilon : [n] \longrightarrow \PP$ is a map (called the \emph{weight function}).
\end{defn}
In a labelled weighted poset each node is marked with two numbers: its label $i \in [n]$ and its weight $\epsilon(i)$ (see Figure \ref{fig : poset} for an example).
\begin{figure}[htbp]
\begin{center}
 \includegraphics[scale=0.16]{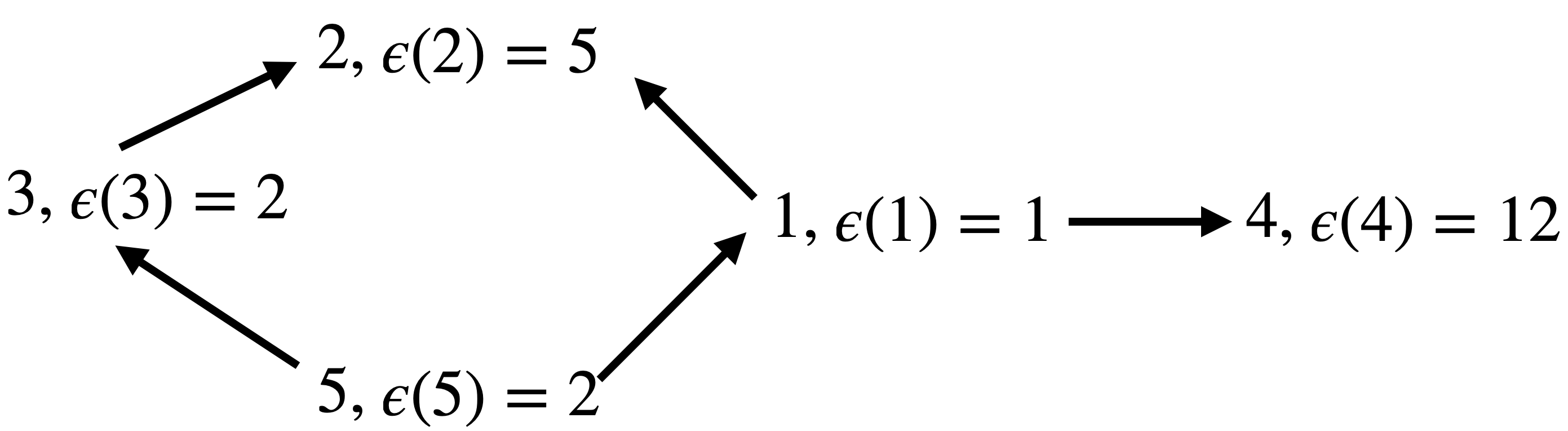}\caption{A $5$-vertex labelled weighted poset. Arrows show the covering relations.}
 \label{fig : poset}
 \end{center}
 \end{figure}
 For any set $\mathcal{Z}\subseteq \PP^{\pm}$, we define the generating function $\Gamma_\mathcal{Z}([n], <_P, \epsilon)$ of the labelled weighted poset $([n], <_P, \epsilon)$ by
\begin{equation}
\label{eq : weightGamma}
\Gamma_\mathcal{Z}([n], <_P, \epsilon) = \sum_{f \in \mathcal{L}_\mathcal{Z}([n], <_P)}\ \ \prod_{1\leq i \leq n}x^{\epsilon(i)}_{|f(i)|} .
\end{equation} 
\begin{remark}
The difference between Equations (\ref{eq : weightGamma}) and (\ref{eq : GammaTrad}) is the exponent $\epsilon(i)$ in $x^{\epsilon(i)}_{|f(i)|}$.
\end{remark}
\subsection{Universal quasisymmetric functions}
\begin{defn}
Let $\alpha = (\alpha_1, \alpha_2, \ldots, \alpha_n)$ be a composition.
Let $\pi=\pi_1\dots\pi_n$ be a permutation in $S_n$.
Let $P_{\pi,\alpha} = ([n],<_\pi,\alpha)$ denote the labelled weighted poset composed of the labelled poset $([n], <_\pi)$ and the weight function sending the vertex labelled $\pi_i$ to $\alpha_i$ (see Figure \ref{fig : monomial}).
We define the \emph{universal quasisymmetric function} $U^\mathcal{Z}_{\pi,\alpha}$ as the generating function
\begin{equation}
U^\mathcal{Z}_{\pi,\alpha} = \Gamma_\mathcal{Z}([n],<_\pi, \alpha).
\end{equation}
\end{defn}
\begin{figure}[htbp]
\begin{center}
 \includegraphics[scale=0.20]{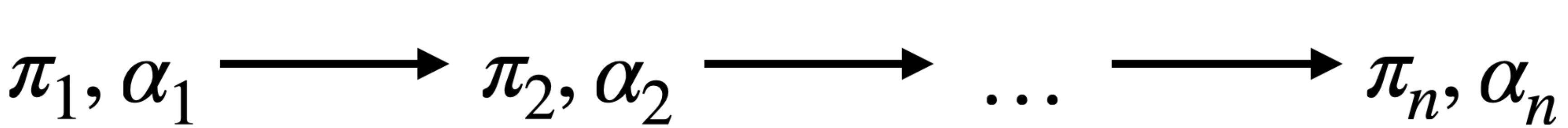}\caption{The weighted poset associated to the universal quasisymmetric function $U^\mathcal{Z}_{\pi,\alpha}$.}
 \label{fig : monomial}
 \end{center}
 \end{figure}%
\begin{lem}
\label{lem : SpecU}
Let $n \in \NN$. Let $id_{n}$ and $\overline{id_{n}}$ denote the two permutations in $S_n$ given by $id_{n} = 1~2~3\dots n$ and $\overline{id_{n}} = n~n-1~n-2\dots 1$ (in one-line notation). Denote further $(1^n)$ the composition of $n$ with $n$ entries equal to $1$. Let $\pi \in S_n$. Then,
\begin{align}
\label{eq : UE} U^\mathcal{\PP}_{\pi,(1^n)} = L_\pi, ~~~~~~U^\mathcal{\PP^\pm}_{\pi,(1^n)} = K_\pi, ~~~~~~U^{\mathcal{\PP}}_{\overline{id_{n}},\alpha} = M_{\alpha},
~~~~~U^{\mathcal{\PP^\pm}}_{id_{n},\alpha} = \eta_{\alpha}.
\end{align}
\end{lem}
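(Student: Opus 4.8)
The plan is to treat the four equalities separately, unwinding the definition \eqref{eq : weightGamma} of the generating function $\Gamma_\mathcal{Z}$ in each case, in increasing order of difficulty.

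\textbf{The first two equalities.} When the weight is $\alpha = (1^n)$, every exponent $\epsilon(i)$ in \eqref{eq : weightGamma} equals $1$, so the product $\prod_i x_{|f(i)|}^{\epsilon(i)}$ collapses to $\prod_i x_{|f(i)|}$ and $\Gamma_\mathcal{Z}([n],<_\pi,(1^n))$ reduces to the unweighted generating function $\Gamma_\mathcal{Z}([n],<_\pi)$ of \eqref{eq : GammaTrad}. Hence $U^{\PP}_{\pi,(1^n)} = \Gamma_{\PP}([n],<_\pi) = L_\pi$ and $U^{\PP^{\pm}}_{\pi,(1^n)} = \Gamma_{\PP^{\pm}}([n],<_\pi) = K_\pi$ follow immediately from Proposition \ref{prop : LK}.

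\textbf{The monomial identity.} For $\pi = \overline{id_n}$ the poset $<_\pi$ is the chain $n <_\pi n-1 <_\pi \cdots <_\pi 1$, and each covering relation $\pi_i <_\pi \pi_{i+1}$ joins a larger label to a smaller one. Thus for $\mathcal{Z} = \PP$ (ordinary $P$-partitions) condition (ii) of Definition \ref{def : ppart} applies at every step and forces the strict chain $f(\pi_1) < f(\pi_2) < \cdots < f(\pi_n)$; conversely any strictly increasing assignment is a valid $P$-partition, and these conditions are equivalent to the full poset condition by transitivity. Writing $i_j := f(\pi_j)$ and recalling that $\pi_j$ carries weight $\alpha_j$, the monomial $\prod_i x_{|f(i)|}^{\epsilon(i)}$ becomes $x_{i_1}^{\alpha_1} \cdots x_{i_n}^{\alpha_n}$ with $i_1 < \cdots < i_n$, so summing over all such $P$-partitions reproduces exactly the defining sum of $M_\alpha$.

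\textbf{The enriched monomial identity.} This is the main case. For $\pi = id_n$ the poset $<_\pi$ is the chain $1 <_\pi 2 <_\pi \cdots <_\pi n$, every covering relation joins a smaller label to a larger one, so for $\mathcal{Z} = \PP^{\pm}$ condition (i) of Definition \ref{def : enriched} governs each step: a $\PP^{\pm}$-enriched $P$-partition is precisely a sequence $f(1),\ldots,f(n) \in \PP^{\pm}$ that is weakly increasing in the order $-1 < 1 < -2 < 2 < \cdots$, with equality $f(i) = f(i+1)$ permitted only when $f(i) \in \PP$. I would then compare with the expression for $\eta_\alpha$ given by Proposition \ref{prop.eta.through-x2}, namely $\sum_{i_1 \le \cdots \le i_n} 2^{|\{i_1,\ldots,i_n\}|} x_{i_1}^{\alpha_1} \cdots x_{i_n}^{\alpha_n}$, and match the two sums. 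Since the $\PP^{\pm}$-order refines the order by absolute value, fixing an enriched $P$-partition determines a weakly increasing tuple of absolute values $(|f(1)|,\ldots,|f(n)|) = (i_1,\ldots,i_n)$, and the monomial it contributes is $x_{i_1}^{\alpha_1} \cdots x_{i_n}^{\alpha_n}$; so the task reduces to counting, for each fixed weakly increasing tuple $(i_1,\ldots,i_n)$, the sign choices $f(j) = \pm i_j$ that yield an admissible enriched $P$-partition, and showing this count is $2^{|\{i_1,\ldots,i_n\}|}$.

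The sign count is where the real work lies, and I expect it to be the main obstacle. The key is to break the tuple into its maximal constant runs. Across a strict ascent $i_j < i_{j+1}$ the two values lie in different absolute-value blocks of $\PP^{\pm}$, so $f(j) < f(j+1)$ holds regardless of signs and no constraint is imposed. Within a maximal run of equal values $i_j = i_{j+1} = m$, the only admissible consecutive sign pairs are $(-m,+m)$ and $(+m,+m)$, since $(+m,-m)$ violates monotonicity and $(-m,-m)$ violates the positivity-of-ties condition; equivalently, every entry of the run after the first is forced to be positive while the first sign is free. Hence each run contributes a factor of $2$, and because the number of maximal constant runs of a weakly increasing tuple equals the number of distinct values $|\{i_1,\ldots,i_n\}|$, the total number of admissible sign assignments is $2^{|\{i_1,\ldots,i_n\}|}$. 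This matches the coefficient in Proposition \ref{prop.eta.through-x2}, completing the identification $U^{\PP^{\pm}}_{id_n,\alpha} = \eta_\alpha$.
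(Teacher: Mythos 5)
Your proof is correct, and it is precisely the routine verification that the paper leaves implicit (the lemma is stated without proof): the first two identities are immediate specializations to $\epsilon\equiv 1$ via Proposition \ref{prop : LK}, the third unwinds the strict chain forced by condition (ii) of Definition \ref{def : ppart}, and the fourth reduces to the standard sign-count for enriched $P$-partitions on an increasing chain, which reproduces exactly the formula of Proposition \ref{prop.eta.through-x2}. Your run-by-run analysis of admissible sign pairs $(-m,+m)$ and $(+m,+m)$ correctly yields the factor $2^{\left\vert\{i_1,\ldots,i_n\}\right\vert}$, so nothing is missing.
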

As one might expect, the product of two universal quasisymmetric functions can be computed in a similar way as the product of two fundamental quasisymmetric functions. To state this result, we need to define the \emph{coshuffle} of two pairs $(\pi,\alpha)$ and $(\sigma,\beta)$.
\begin{defn}
Let $\pi \in S_n$ and $\sigma \in S_m$ be two permutations. Let $\alpha$ and $\beta$ be two compositions with respectively $n$ and $m$ entries. The \emph{coshuffle} of $(\pi,\alpha)$ and $(\sigma,\beta)$, denoted $(\pi,\alpha) \shuffle (\sigma,\beta)$, is the set of pairs $(\tau,\gamma)$ where 
\begin{itemize}
\item $\tau \in S_{n+m}$ is a shuffle of $\pi$ and $n+\sigma$ (that is, $\tau \in \pi \shuffle \sigma$), and 
\item $\gamma$ is a composition with $n+m$ entries, obtained by shuffling the entries of $\alpha$ and $\beta$ using the \emph{same shuffle} used to build $\tau$ from the letters of $\pi$ and $n+\sigma$. 
\end{itemize}
\begin{example}
$(1\mathbf{3} 2, (2, \mathbf{1}, 2))$ is a coshuffle of $(12,(2,2))$ and $(1,(1))$.
\end{example}
\end{defn}
\begin{theorem}
Let $\mathcal{Z}$ be a subset of $\PP^{\pm}$.
Let $\pi$ and $\sigma$ be two permutations in $S_n$ and $S_m$, and let $\alpha = (\al_1,\dots,\al_n)$ and $\beta = (\be_1,\dots,\be_m)$ be two compositions with $n$ and $m$ entries.
The product of two universal quasisymmetric functions is given by
\begin{equation}
\label{eq : UU}
U^\mathcal{Z}_{\pi,\alpha}U^\mathcal{Z}_{\sigma,\beta}=\sum_{(\tau,\gamma)\in(\pi,\alpha)\shuffle(\sigma,\beta)}U^\mathcal{Z}_{\tau,\gamma} .
\end{equation}
\end{theorem}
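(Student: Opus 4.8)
The plan is to prove the product formula \eqref{eq : UU} directly at the level of $\mathcal{Z}$-enriched $P$-partitions, by exhibiting a bijection between the index set of the left-hand side and that of the right-hand side. The key observation is that both sides of \eqref{eq : UU} are sums of monomials $\prod_{i} x_{|f(i)|}^{\epsilon(i)}$ ranging over certain families of enriched $P$-partitions, so it suffices to match up these contributions. First I would unfold the left-hand side: by definition, $U^{\mathcal{Z}}_{\pi,\alpha} U^{\mathcal{Z}}_{\sigma,\beta} = \Gamma_{\mathcal{Z}}([n],<_\pi,\alpha)\,\Gamma_{\mathcal{Z}}([m],<_\sigma,\beta)$, which expands as a sum over \emph{pairs} $(f,g)$ of a $\mathcal{Z}$-enriched $([n],<_\pi)$-partition $f$ and a $\mathcal{Z}$-enriched $([m],<_\sigma)$-partition $g$, with each pair contributing $\prod_{1\le i\le n} x_{|f(i)|}^{\alpha_i'}\prod_{1\le j\le m} x_{|g(j)|}^{\beta_j'}$, where the exponents come from the weight functions (i.e.\ $\alpha$ on $<_\pi$ and $\beta$ on $<_\sigma$).

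The central step is to interpret such a pair $(f,g)$ as a single enriched $P$-partition on a disjoint-union poset together with a choice of how the two chains interleave. Concretely, form the poset on $[n]\sqcup[m] \cong [n+m]$ (relabelling the second block by $i\mapsto n+i$) carrying the disjoint union of $<_\pi$ and $<_\sigma$, equipped with the concatenated weight function. A $\mathcal{Z}$-enriched partition $h$ of this disjoint union is exactly a pair $(f,g)$. Now I would invoke the standard $P$-partition decomposition argument (as in the proof of the unweighted shuffle identity \eqref{eq : GGG}): every such $h$ determines, via the relative order of its values in the totally ordered set $\mathcal{Z}\subseteq\PP^\pm$ (with ties broken according to the labels, exactly as in Definitions~\ref{def : enriched}--\ref{def : Zenriched}), a unique linear extension of the disjoint-union poset. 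The linear extensions of the disjoint union of the two chains $<_\pi$ and $<_\sigma$ are precisely the shuffles $\tau\in\pi\shuffle\sigma$, and the set of $h$ refining a fixed $\tau$ is exactly $\mathcal{L}_{\mathcal{Z}}([n+m],<_\tau)$. This is the crux: I must check that the weighted monomial $\prod_i x_{|h(i)|}^{\text{(weight of }i)}$ is preserved under this identification, which forces the weight on vertex $\tau_\ell$ of the poset $<_\tau$ to be the $\ell$-th entry of the composition $\gamma$ obtained by shuffling $\alpha$ and $\beta$ along the \emph{same} shuffle — and this is precisely the definition of the coshuffle $(\pi,\alpha)\shuffle(\sigma,\beta)$.

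Summing over all shuffles then yields $\sum_{\tau\in\pi\shuffle\sigma}\Gamma_{\mathcal{Z}}([n+m],<_\tau,\gamma_\tau) = \sum_{(\tau,\gamma)\in(\pi,\alpha)\shuffle(\sigma,\beta)} U^{\mathcal{Z}}_{\tau,\gamma}$, which is the right-hand side of \eqref{eq : UU}. The step I expect to require the most care is verifying that the relative-order map sends each pair $(f,g)$ to a genuine $\mathcal{Z}$-enriched partition of $<_\tau$ \emph{and} that this assignment is a bijection onto $\bigsqcup_{\tau} \mathcal{L}_{\mathcal{Z}}([n+m],<_\tau)$; the subtlety lies in the tie-breaking rule for equal values of the form $\pm n\in\PP^\pm$ (equalities are only permitted for $+$-values ascending and $-$-values descending, per Definition~\ref{def : enriched}), so one must confirm that the standardization respects these constraints uniformly across all $\mathcal{Z}$. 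Since the weight function plays no role in this order-theoretic bijection — it only relabels exponents and is carried along passively — the argument is genuinely a weighted refinement of the classical proof of \eqref{eq : GGG}, and specializing to $\alpha=(1^n)$, $\beta=(1^m)$ recovers that identity.
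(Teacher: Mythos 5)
Your proof is correct, but it takes a different route from the paper's. Both arguments begin the same way, by identifying $U^{\mathcal{Z}}_{\pi,\alpha}U^{\mathcal{Z}}_{\sigma,\beta}$ with the generating function of the disjoint union of the two weighted chains on $[n+m]$. From there the paper proceeds \emph{recursively}: it picks the first incomparable pair $(\pi_1, n+\sigma_1)$, splits the sum over enriched partitions into the two mutually exclusive cases $f(\pi_1)<f(n+\sigma_1)$ or $f(\pi_1)=f(n+\sigma_1)\in\PP$, versus $f(n+\sigma_1)<f(\pi_1)$ or $f(n+\sigma_1)=f(\pi_1)\in-\PP$, thereby writing the generating function as a sum of generating functions of two posets each having one fewer incomparable pair, and iterates until only chains (i.e.\ shuffles) remain. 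You instead give the \emph{global} version of this argument: the standardization map that assigns to each enriched partition $h$ of the disjoint union the unique linear extension $\tau$ determined by the values of $h$ with the sign-dependent tie-breaking rule, yielding a bijection onto $\bigsqcup_{\tau\in\pi\shuffle\sigma}\mathcal{L}_{\mathcal{Z}}([n+m],<_\tau)$. The two proofs are logically equivalent (unrolling the paper's recursion produces exactly your bijection, since each binary choice at an incomparable pair decides the relative order of two elements in the linear extension), but the trade-off is real: the recursive version only requires checking, locally at each step, that the two cases are exhaustive and mutually exclusive under the total order $-1<1<-2<2<\cdots$, whereas your version requires the more delicate global verification that standardization is well defined and bijective uniformly in $\mathcal{Z}$ --- the very point you correctly flag as the step needing the most care. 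Your observation that the weight function is carried along passively, so that the whole argument is a weighted decoration of the classical proof of \eqref{eq : GGG}, is exactly right and is also the reason the paper can afford to give only a sketch.
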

\begin{proof}[Proof (sketch).]
First notice that the product $U^\mathcal{Z}_{\pi,\alpha}U^\mathcal{Z}_{\sigma,\beta}$ is equal to the generating function of the labelled weighted poset $([n+m],<_{\pi,\sigma},(\alpha,\beta))$ depicted on the left-hand-side of Figure \ref{fig : prodmonomial} (do not pay attention to the blue arrows). 
\begin{figure}[htbp]
\begin{center}
 \includegraphics[scale=0.22]{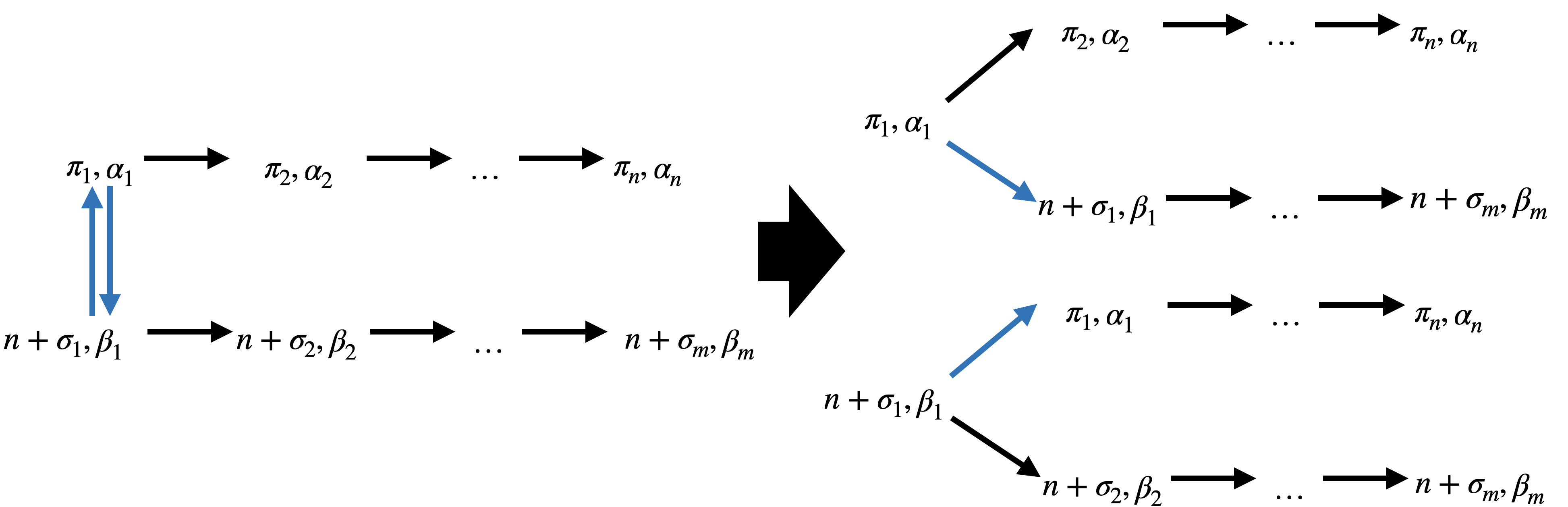}\caption{Decomposition of a double chain weighted poset into two posets with one incomparable pair less.}
 \label{fig : prodmonomial}
 \end{center}
 \end{figure}
Then the computation of $\Gamma_{\mathcal{Z}}([n+m],<_{\pi,\sigma},(\alpha,\beta))$  is performed recursively by looking at the first incomparable pair $((\pi_1,\al_1),( n+\sigma_1,\beta_1))$ (blue arrows) and splitting the sum in the generating function across the following two mutually exclusive cases:
 \begin{itemize}
\item[(i)] $f(\pi_1) < f(n+\sigma_1)$ or $f(\pi_1) = f(n+\sigma_1) \in \PP$,
\item[(ii)] $f(n+\sigma_1) < f(\pi_1)$ or $f(n+\sigma_1) = f(\pi_1) \in -\PP$.
\end{itemize}
Thus, $\Gamma_{\mathcal{Z}}([n+m],<_{\pi,\sigma},(\alpha,\beta))$ equals the sum of the generating functions of the two labelled weighted posets on the right-hand-side of Figure \ref{fig : prodmonomial}. Iterate the process until there are no more incomparable pairs to get \eqref{eq : UU}.
\end{proof}
\noindent As a final remark, we note that using sequences of $0$'s and $1$'s instead of $(1^n)$ in Lemma \ref{lem : SpecU}, our work also covers the case of weak composition quasisymmetric functions (see \cite{GuoYuZha17}).
\subsection{Product of  enriched monomials}
We compute the product of two enriched monomials. Let $\alpha$ and $\beta$ be two compositions with $n$ and $m$ entries. We have from Equations (\ref{eq : UE}) and (\ref{eq : UU}):
\begin{align}
\label{eq : MME}
\eta_\al \eta_\beta = U^{\PP^\pm}_{id_{n},\alpha}U^{\PP^\pm}_{id_{m},\beta}=\sum_{(\tau,\gamma)\in(id_{n},\alpha)\shuffle(id_{m},\beta)}U^{\mathcal{\PP^\pm}}_{\tau,\gamma} .
\end{align}
We need the following definitions.
\begin{defn}
Let $\al = (\al_1, \dots, \al_n)$ be a composition with $n$ entries. For any integer $2 \leq i \leq n-1$, we let $\al^{\downarrow\downarrow i}$ denote the following composition with $n-2$ entries:
\begin{equation*}
\al^{\downarrow\downarrow i} = (\al_1, \dots, \al_{i-2},{\al_{i-1}+\al_i+\al_{i+1}},\al_{i+2},\dots ,\al_n) .
\end{equation*}  
Furthermore, for any peak-lacunar subset $I\subseteq [n-1]$, we set $\al^{\downarrow\downarrow I}= \al$ if $I=\varnothing$ and
\[
\al^{\downarrow\downarrow I}
= \left( \left( \cdots \left( \al^{\downarrow i_k} \right) \cdots \right)^{\downarrow i_2} \right)^{\downarrow i_1} ,
\]
where $i_1, i_2, \ldots, i_k$ are the elements of $I \neq \varnothing$ in increasing order.
\end{defn}
\begin{example}
Let $\al = (2,1,4,3,2)$. We have $\al^{\downarrow\downarrow 3} = (2,8,2)$ and $\al^{\downarrow \downarrow\{2,4\}} = (12).$
\end{example}

We expand any $U^{\PP^\pm}_{\pi,\al}$ in terms of enriched monomials.

\begin{theorem}
Let $\alpha$ be a composition with $n$ entries and $\pi$ a permutation in $S_n$. We have
\begin{equation}
\label{eq : U as sum}
U^{\PP^\pm}_{\pi,\al} = \sum_{I \subseteq \Peak(\pi)}(-1)^{|I|}\eta_{\al^{\downarrow\downarrow I}} .
\end{equation}
\end{theorem}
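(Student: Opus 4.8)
The plan is to work directly with the enriched $P_\pi$-partitions, exploiting that $P_\pi$ is the chain $\pi_1 <_\pi \pi_2 <_\pi \cdots <_\pi \pi_n$. First I would reparametrize by writing $g_i = f(\pi_i)$: since $P_\pi$ is a chain, an enriched $P_\pi$-partition is the same as a sequence $(g_1,\ldots,g_n)\in(\PP^\pm)^n$ whose consecutive comparisons are governed by $\Des(\pi)$ (transitivity of the order on $\PP^\pm$ makes the consecutive conditions sufficient). Explicitly, at step $i$ one requires ``$g_i<g_{i+1}$ or $g_i=g_{i+1}\in\PP$'' when $i\notin\Des(\pi)$ (an ascent) and ``$g_i<g_{i+1}$ or $g_i=g_{i+1}\in-\PP$'' when $i\in\Des(\pi)$ (a descent), with weight $\prod_i x_{|g_i|}^{\al_i}$ (the weight function sends $\pi_i\mapsto\al_i$). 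Splitting each $g_i$ into its absolute value $a_i:=|g_i|$ and its sign $\delta_i\in\{-,+\}$, the order on $\PP^\pm$ forces $a_1\le a_2\le\cdots\le a_n$ in every case, and the monomial only sees the $a_i$; so the computation reduces to counting, for each fixed weakly increasing $a_1\le\cdots\le a_n$, the admissible sign vectors $(\delta_1,\ldots,\delta_n)$.

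The core of the proof is this sign count. I would first note that a step $i$ with $a_i<a_{i+1}$ (a boundary between two maximal runs of equal absolute value, which I call \emph{blocks}) automatically satisfies $g_i<g_{i+1}$ and hence imposes \emph{no} constraint on the signs. Thus the admissible sign vectors factor as a product over blocks, and inside a block (all values equal to some $k$) the constraints are purely local: an internal ascent at step $t$ forces $\delta_{t+1}=+$, while an internal descent forces $\delta_t=-$. A short case analysis then shows that a block admits sign assignments iff its internal ascent/descent word contains no ascent immediately followed by a descent --- equivalently, no peak --- in which case the word has the shape $D^aA^b$ and admits exactly two assignments (a single free pivot sign). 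Multiplying over blocks, the number of admissible sign vectors is $2^{|\{a_1,\ldots,a_n\}|}$ if at every $s\in\Peak(\pi)$ the values $a_{s-1},a_s,a_{s+1}$ are \emph{not} all equal, and $0$ otherwise (a peak with $a_{s-1}=a_s=a_{s+1}$ being exactly an internal ascent-then-descent inside one block). Hence
\[
U^{\PP^\pm}_{\pi,\al}\;=\;\sum_{\substack{a_1\le\cdots\le a_n;\\ \forall s\in\Peak(\pi):\ \neg(a_{s-1}=a_s=a_{s+1})}}2^{|\{a_1,\ldots,a_n\}|}\,x_{a_1}^{\al_1}\cdots x_{a_n}^{\al_n}.
\]

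Finally I would run inclusion--exclusion over the peaks at which equality $a_{s-1}=a_s=a_{s+1}$ \emph{does} hold. Writing the condition as $\bigcap_{s\in\Peak(\pi)}\overline{A_s}$ with $A_s=\{a_{s-1}=a_s=a_{s+1}\}$, this gives $U^{\PP^\pm}_{\pi,\al}=\sum_{I\subseteq\Peak(\pi)}(-1)^{|I|}G_I$, where $G_I$ is the same weighted sum but now \emph{requiring} $a_{s-1}=a_s=a_{s+1}$ for every $s\in I$. Imposing these equalities fuses, for each $s\in I$, the three positions $s-1,s,s+1$ into one carrying exponent $\al_{s-1}+\al_s+\al_{s+1}$; because $\Peak(\pi)$ (hence $I$) is peak-lacunar, the fusions are compatible and produce exactly $\al^{\downarrow\downarrow I}$. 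Comparing $G_I$ with Proposition \ref{prop.eta.through-x2} identifies $G_I=\eta_{\al^{\downarrow\downarrow I}}$, yielding \eqref{eq : U as sum}.

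The step I expect to be the main obstacle is the per-block sign count: getting the forcing rules right at both ends of a block and proving that the admissible internal words are precisely those of type $D^aA^b$, each contributing exactly two sign assignments rather than some other number. The inclusion--exclusion and the identification with $\al^{\downarrow\downarrow I}$ are then routine; the only delicate point there is checking that overlapping peak-triples (when two elements of $I$ differ by exactly $2$) fuse into a single longer run consistently with the iterated definition of $\al^{\downarrow\downarrow I}$, which is guaranteed by peak-lacunarity.
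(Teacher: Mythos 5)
Your proposal is correct and follows essentially the same route as the paper: establish the intermediate formula $U^{\PP^\pm}_{\pi,\al}=\sum_{i_1\leq\cdots\leq i_n,\ \forall s\in\Peak(\pi):\neg(i_{s-1}=i_s=i_{s+1})}2^{|\{i_1,\ldots,i_n\}|}x_{i_1}^{\al_1}\cdots x_{i_n}^{\al_n}$, then apply inclusion--exclusion over the peaks and identify each resulting inner sum with $\eta_{\al^{\downarrow\downarrow I}}$ via Proposition \ref{prop.eta.through-x2}. The only difference is that you carry out in full the block-by-block sign count that the paper dismisses as obtainable ``without too much trouble,'' and your analysis of that step (forced signs at internal ascents/descents, the $D^aA^b$ shape, two choices per block) is accurate.
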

\begin{proof}[Proof (sketch).]
From the definition of  $U^{\PP^\pm}_{\pi,\al}$, one can obtain without too much trouble that
\begin{equation*}
U^{\PP^\pm}_{\pi,\al} = \sum_{\substack{i_1\leq i_2\leq\dots \leq i_n;\\j\in \Peak(\pi) \Rightarrow \lnot \left( i_{j-1}=i_{j}=i_{j+1}\right)}}2^{|\{i_1,i_2,\dots,i_n\}|}x_{i_1}^{\al_1}x_{i_2}^{\al_2}\dots x_{i_n}^{\al_n} .
\end{equation*}
By the inclusion-exclusion principle, this can be rewritten as
\begin{equation*}
U^{\PP^\pm}_{\pi,\al} =\sum_{I \subseteq \Peak(\pi)}(-1)^{|I|} \underbrace{\sum_{\substack{i_1\leq i_2\leq\dots \leq i_n;\\j\in I \Rightarrow  i_{j-1}=i_{j}=i_{j+1}}}2^{|\{i_1,i_2,\dots,i_n\}|}x_{i_1}^{\al_1}x_{i_2}^{\al_2}\dots x_{i_n}^{\al_n}}_{\substack{= \eta_{\al^{\downarrow\downarrow I}}\\\text{(by Proposition \ref{prop.eta.through-x2})}}} .
\qedhere
\end{equation*}
\end{proof}
We may now state our final theorem: a product rule for the enriched monomial basis.
\begin{theorem}
\label{theorem : EEE}
Let $\al=(\al_1,\dots,\al_n)$ and $\beta=(\be_1,\dots,\be_m)$ be two compositions. Given a composition $\gamma$ obtained by shuffling $\al$ and $\be$ (we shall denote this by $\gamma \in \al\shuffle\be$), let $S_\beta(\gamma)$ be the set of the positions of the entries of $\beta$ in $\gamma$. Denote further $S_\beta(\gamma)-1 =\{i-1|i \in S_\beta(\gamma)\}$. Then,
\begin{equation}
\label{eq : EEE}
\eta_\al \eta_\beta
= \sum_{\substack{\gamma \in \al \shuffle \be ;\\
                   I\subseteq \left(S_{\be}(\gamma)\setminus (S_{\be}(\gamma)-1)\right) \setminus \left\{1\right\}}}
(-1)^{|I|}\eta_{\gamma^{\downarrow\downarrow I}} .
\end{equation}
The sum ranges not over compositions $\gamma$ but over ways to shuffle $\al$ with $\be$. Thus, the same $\gamma$ can appear in several addends of the sum.
\end{theorem}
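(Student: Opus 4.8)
The plan is to chain together the two facts already established in this section, namely the product expansion \eqref{eq : MME} and the single‑poset expansion \eqref{eq : U as sum}. Starting from
\[
\eta_\al \eta_\beta = \sum_{(\tau,\gamma)\in(id_{n},\alpha)\shuffle(id_{m},\beta)} U^{\PP^\pm}_{\tau,\gamma},
\]
it suffices to expand each $U^{\PP^\pm}_{\tau,\gamma}$ by \eqref{eq : U as sum} and then re‑index. First I would record the bijection between the coshuffles $(\tau,\gamma)\in(id_{n},\alpha)\shuffle(id_{m},\beta)$ and the ways to shuffle $\al$ with $\be$: here $\tau$ is a shuffle of the two strictly increasing words $id_{n}=1\,2\cdots n$ and $n+id_{m}=(n{+}1)(n{+}2)\cdots(n{+}m)$, and the interleaving pattern that produces $\tau$ is recovered from $\tau$ by recording which positions carry a letter larger than $n$. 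This is exactly the pattern producing $\gamma$ from $\al$ and $\be$, and the positions carrying the large letters are precisely $S_\be(\gamma)$. Thus a coshuffle is the same datum as a shuffle $\gamma\in\al\shuffle\be$ (counted as an interleaving, not as a composition), which matches the remark that the outer sum in \eqref{eq : EEE} ranges over ways to shuffle.

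Applying \eqref{eq : U as sum} termwise then turns the product into
\[
\eta_\al\eta_\beta = \sum_{\gamma\in\al\shuffle\be}\ \sum_{I\subseteq\Peak(\tau)}(-1)^{|I|}\eta_{\gamma^{\downarrow\downarrow I}},
\]
where $\tau$ is the permutation associated with the shuffle $\gamma$. Comparing this with the target \eqref{eq : EEE}, the entire theorem reduces to the purely combinatorial identity
\[
\Peak(\tau) = \bigl(S_\be(\gamma)\setminus(S_\be(\gamma)-1)\bigr)\setminus\{1\},
\]
read as an equality of peak‑lacunar subsets of $[n+m-1]$.

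To prove this identity I would exploit that $\tau$ is a shuffle of two strictly increasing words. Write $B=S_\be(\gamma)=\{\,i : \tau_i>n\,\}$ for the set of positions carrying a large letter. Since values increase within each of the two interleaved words, a descent $\tau_i>\tau_{i+1}$ can occur only when $\tau_i$ is large and $\tau_{i+1}$ is small; hence $\Des(\tau)=\{\,i\in[n+m-1]: i\in B \text{ and } i+1\notin B\,\}=(B\setminus(B-1))\cap[n+m-1]$. Moreover, whenever $i\in B$ one automatically has $\tau_{i-1}<\tau_i$ (clear if $\tau_{i-1}$ is small, and forced by increasingness if $\tau_{i-1}$ is large). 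Consequently a position $i\ge 2$ is a peak of $\tau$ if and only if it is a descent: the ascent into $i$ demanded by a peak is free once $\tau_i>\tau_{i+1}$ forces $i\in B$. Therefore $\Peak(\tau)=\Des(\tau)\setminus\{1\}$, which is the claimed set. I would also note that $B\setminus(B-1)$ contains no two consecutive integers (if $i$ and $i+1$ both lay in it, then $i+1$ would be simultaneously in and not in $B$), so the index set is genuinely peak‑lacunar and every $\gamma^{\downarrow\downarrow I}$ occurring in the sum is well‑defined.

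The main obstacle is the boundary bookkeeping in this last step. The delicate points are pinning down that peaks and descents coincide precisely away from position $1$, and handling the element $n+m$: when the last entry of $\gamma$ comes from $\be$, the literal set $\bigl(S_\be(\gamma)\setminus(S_\be(\gamma)-1)\bigr)\setminus\{1\}$ contains $n+m$, yet this element lies outside $[n+m-1]$ and contributes nothing, since $\gamma^{\downarrow\downarrow I}$ is only defined for peak‑lacunar $I\subseteq[n+m-1]$. I would therefore be careful to argue that the implicit restriction $I\subseteq[n+m-1]$ makes the index set in \eqref{eq : EEE} agree with the genuine peak set $\Peak(\tau)$, so that the two sums are identical. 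Once these endpoint checks are in place, the remainder is the formal substitution carried out above.
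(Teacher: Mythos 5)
Your proposal is correct and follows essentially the same route as the paper's own proof: apply \eqref{eq : MME}, expand each $U^{\PP^\pm}_{\tau,\gamma}$ via \eqref{eq : U as sum}, and identify $\Peak(\tau)$ with $\bigl(S_\be(\gamma)\setminus(S_\be(\gamma)-1)\bigr)\setminus\{1\}$ using the fact that $\tau$ shuffles two increasing words. Your extra attention to the endpoint $n+m$ (which can lie in the stated index set but is not a peak and indexes no defined $\gamma^{\downarrow\downarrow I}$) is a genuine refinement that the paper's sketch glosses over.
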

\begin{proof}[Proof (sketched).]
Recall \eqref{eq : MME}, and rewrite the right-hand side using \eqref{eq : U as sum}.
Let $(\tau,\gamma)$ be a coshuffle in $(id_n,\al)\shuffle(id_m,\be)$. The index $i$ belongs to $\Peak(\tau)$ if and only if $\tau_i$ is a letter of $n+id_m$ and $\tau_{i+1}$ is a letter of $id_n$ and we have $i > 1$. That is, if and only if $i$ is the index of an entry of $\beta$ in $\gamma$ and $i+1$ is the index of an entry of $\al$.
Thus, $\Peak(\tau) = \left(S_{\be}(\gamma)\setminus (S_{\be}(\gamma)-1)\right) \setminus \left\{1\right\}$.
Therefore, we obtain \eqref{eq : EEE}.
\end{proof}
\begin{remark}
According to the proof of Theorem \ref{theorem : EEE}, the $j^{th}$ entry of $\gamma^{\downarrow \downarrow I}$ in (\ref{eq : EEE}) is a sum of $u_j$ parts of $\al$ and $v_j$ parts of $\beta$ for some integers $u_j$, $v_j$ with $\left|u_j - v_j\right| = 1$. Indeed, as $\tau$ in Equation (\ref{eq : MME}) is a shuffle of $id_n$ and $n+id_m$, ``successive'' peaks (i.e. at a distance of only two positions) in $\tau$ occur whenever $\gamma$ contains a sequence of successive parts of $\beta$ and $\alpha$ preceded by either an entry of $\alpha$ or $\beta$. As an example, if $n=3$ and $m=2$, $\alpha = (2,1,2)$ and $\beta = (1,1)$, then  $\tau = 14253$ is a shuffle of $id_3$ and $3+id_2$ whose coshuffle is $\gamma = (2,{\bf 1},1,{\bf 1},2)$. In this case $\gamma^{\downarrow \downarrow \{2,4\}} = (7)$, $\gamma^{\downarrow\downarrow 2} = (4,1,2)$ and $\gamma^{\downarrow\downarrow 4} = (2,1,4)$.
\end{remark}
\begin{example}
As an example, one has
\begin{align*}
\eta_{(1,1)}\eta_{(2,3)} &= \eta_{(1, 1, 2, 3)} + \eta_{(1, 2, 1, 3)} -\eta_{(4, 3)} + \eta_{(2,1,1,3)}+\eta_{(1,2,3,1)}\\
&\qquad -\eta_{(1,6)}+\eta_{(2,1,3,1)}-\eta_{(2,5)}+\eta_{(2,3,1,1)}-\eta_{(6,1)},\\
\eta_{(1,2)}\eta_{(2)} &=  \eta_{(2,1,2)} +   2\eta_{(1,2,2)} - \eta_{(5)}.
\end{align*}

\end{example}

\acknowledgements{DG thanks the Institut Mittag--Leffler for its hospitality in Spring 2020, and thanks Marcelo Aguiar, G\'erard H. E. Duchamp, Angela Hicks, Vasu Tewari, Alexander Zhang, and Yan Zhuang for enlightening conversations. The SageMath computer algebra system 
has been used in discovering some of the results.}

\printbibliography

\end{document}